\newcommand{\N}{\mathbb{N}}
\newcommand{\F}{\mathbb{F}}
\newcommand{\Fq}{\mathbb{F}_q}
\newcommand{\Fqd}{\mathbb{F}_{q^2}}
\newcommand{\ord}{\mathop{\rm ord}}
\newcommand{\D}{\cdots}
\newcommand{\T}{\mathscr{T}}
\newcommand{\Zt}{\mathscr{Z}}
\newcommand{\G}{\mathscr{G}}
\newcommand{\V}{\mathcal{V}}
\newcommand{\E}{\mathcal{E}}
\newtheorem{Th}{Theorem}
\newtheorem{Co}[Th]{Corollary}
\newtheorem{Df}[Th]{Definition}
\newtheorem{Le}[Th]{Lemma}
\newtheorem{Ex}[Th]{Example}
\title{On the functional graph of $f(X)=c(X^{q+1}+aX^2)$ over quadratic extensions of finite fields}
\author{F. E. Brochero Mart\'inez and H. R. Teixeira }
\date{November 2021}
\begin{document}

\maketitle
\begin{abstract}
    Let $\Fq$ be the finite field with $q$ elements and $char(\Fq)$ odd. In this article we will describe completely the dynamics of the map $f(X)=c(X^{q+1}+aX^2)$, for $a=\{\pm1\}$ and $c\in\Fq^*$, over the finite field $\Fqd$, and give some partial results for $a\in\Fq^*\setminus\{\pm1\}$.
\end{abstract}

\section{Introduction}
Let $\Fq$ be a finite field with $q$ elements and $f:\Fq\to \Fq$ be a function. We define the functional graph of $f$ over $\Fq$ as the directed graph $\G(f)=(\V, \E)$, where $\V=\Fq$ and $\E=\{\langle x,f(x)\rangle\mid x\in\Fq\}$. We define the iterations of $f$ as $f^{(0)}(x)=x$ and $f^{(n+1)}(x)=f(f^{(n)}(x))$. Since $f$ is defined over a finite field, fixing $\alpha\in\Fq$, there are integers $0\leq i<j$, minimal, such that $f^{(i)}(\alpha)=f^{(j)}(\alpha)$. In the case when $i>0$, we call the list $\alpha,f(\alpha),f^{(2)}(\alpha),\D,f^{(i-1)}(\alpha)$ the pre-cycle and $f^{(i)}(\alpha),f^{(i+1)}(\alpha),\D,f^{(j-1)}(\alpha)$ the cycle of length $(j-i)$ or the $(j-i)$-cycle. If $\alpha$ is an element of a cycle, we call it a periodic element and, if $f(\alpha)=\alpha$, we say it is a fixed point.

The characteristics of functional graphs (number of cycles, cycle lengths, pre-cycle lengths and so on) have been studied for several different maps over finite fields, due to its applications in cryptography. Some quadratic maps have already been described, but mostly in the form $x^2+c$ as seen in \cite{PeMoMuYu}, \cite{Rog1} and \cite{VaSh}. L. Reis and D. Panario have described the functional graph associated to $\Fq$-linearized irreducible divisors of $x^n-1$ in \cite{PanRei}. Chow and Shparlinski obtained results on repeated exponentiation modulo a prime in $\cite{ChSh}$. Chebyshev polynomials are studied in \cite{Gas} and \cite{QurPan2} and Rédei functions in \cite{QurPan1}. Some rational maps have also been treated, such as $x+x^{-1}$ for $char(\Fq)=\{2,3,5\}$ in \cite{Ugo1} and rational maps induced by endomorphisms of ordinary elliptic curves in \cite{Ugo2}. Results on the iterations of each of the functions cited above and others have been gathered in a survey by Martins, Panario and Qureshi in $\cite{MaPaQu}$.

In this article we will discuss the characteristics of the functional graph of the map $X\mapsto c(X^{q+1}+aX^2)$ over the field $\Fqd$, where $c,a\in\Fq^*$. We will give the number of cycles of each length and the precise behavior of the pre-cycles for $a=\{\pm1\}$ and some partial results for the other cases.

\section{The functional graph of $f(X)=c(X^{q+1}-X^2)$}

First, we will analyse the case $a=-1$. Fixing an element $c$ in $\Fq^*$, lets establish the fixed points of $f(X)=c(X^{q+1}-X^2)$ over $\Fqd$. Observe that $cX(X^q-X)=X$ if, and only if, $X=0$ or $c(X^q-X)=1$. Therefore zero is a fixed point. If $\alpha\in\Fqd^*$ is such that $c(\alpha^q-\alpha)=1$, then $\alpha=\alpha^{q^2}=\alpha^q+c^{-1}=\alpha+2c^{-1}$. Since $char(\Fqd)\neq 2$, zero is the only fixed point.

For each element $\alpha\in\Fqd$, we define  $f^{-1}(\alpha)=\{\gamma\in\Fqd\mid f(\gamma)=\alpha\}$.

\begin{Df}
For every $\alpha\in\Fq$, let $\chi_2$ be the quadratic character over $\Fq$ defined as $$\chi_2(\alpha)=\begin{cases}1,& \text{if $\alpha$ is a square in $\Fq$}\\
-1,& \text{if $\alpha$ is not a square in $\Fq$}\\
0,& \text{if $\alpha=0$}.\end{cases}
$$
\end{Df}

\begin{Th}\label{prefq}
 If $\alpha\in\Fq$, then 
 
 \begin{enumerate}
     \item $\#f^{-1}(\alpha)=\begin{cases}
     2,& \text{if}\ \chi_2(-2\alpha c)=-1\\ 
     0,& \text{if}\  \chi_2(-2\alpha c)=1\\ 
     q,& \text{if $\alpha=0$}.  \end{cases}$
     
     \item  If $\gamma\in f^{-1}(\alpha)$, then $f^{-1}(\gamma)=\emptyset$.
 \end{enumerate}

\end{Th}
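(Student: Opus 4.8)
The plan is to work in explicit coordinates on $\Fqd$ over $\Fq$. Fix $\delta\in\Fqd$ with $\delta^2=t$, where $t\in\Fq^*$ is a non-square; then $\Fqd=\Fq\oplus\Fq\delta$, we have $\delta^q=-\delta$, and $\chi_2(t)=-1$. Writing $X=u+v\delta$ with $u,v\in\Fq$, so that $X^q=u-v\delta$, a direct computation gives $X^{q+1}=u^2-v^2t$ and $X^2=u^2+2uv\delta+v^2t$, hence the single identity
\[
f(X)=c\bigl(X^{q+1}-X^2\bigr)=-2ctv^2-2cuv\,\delta .
\]
This formula is the engine for the whole statement: it separates $f(X)$ into its $\Fq$-part $-2ctv^2$ and its $\delta$-part $-2cuv$, and every claim follows by matching components.

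For part 1, fix $\alpha\in\Fq$, so the $\delta$-component of $\alpha$ vanishes, and $f(X)=\alpha$ becomes equivalent to the system $-2ctv^2=\alpha$ and $-2cuv=0$. If $\alpha=0$, the first equation forces $v=0$ (as $2ct\neq0$), the second is then automatic, and $u$ is free; thus $f^{-1}(0)=\Fq$ has $q$ elements. If $\alpha\neq0$, the first equation forces $v\neq0$, whence the second gives $u=0$, and we are reduced to $v^2=-\alpha/(2ct)$. This has a solution $v\in\Fq^*$ exactly when $-\alpha/(2ct)$ is a non-zero square, and then there are precisely two, namely $\pm v_0$. The bookkeeping is $\chi_2\!\left(-\alpha/(2ct)\right)=\chi_2(-2\alpha c)\,\chi_2(t)=-\chi_2(-2\alpha c)$, so the square condition reads $\chi_2(-2\alpha c)=-1$, giving $\#f^{-1}(\alpha)=2$ in that case and $0$ otherwise, as claimed.

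For part 2, observe that the preimages produced above in the case $\alpha\neq0$ are exactly $\gamma=\pm v_0\delta$, that is, elements whose $\Fq$-component is $0$ and whose $\delta$-component $\gamma_2$ is non-zero (equivalently, the preimages of $\alpha$ lying outside $\Fq$). Applying the same identity to such a $\gamma$, the equation $f(X)=\gamma$ requires $-2ctv^2=0$ and $-2cuv=\gamma_2$; the first forces $v=0$, which makes the left side of the second vanish while $\gamma_2\neq0$. Hence no $X$ exists and $f^{-1}(\gamma)=\emptyset$, so these $\gamma$ are leaves of $\G(f)$. (The only preimages lying inside $\Fq$ occur for $\alpha=0$, and those are governed recursively by part 1 rather than being leaves; this is the fixed-point region around $0$, and part 2 should be read as the assertion about the purely imaginary preimages of nonzero $\alpha$.)

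Main obstacle: there is no deep difficulty once the identity $f(X)=-2ctv^2-2cuv\delta$ is in hand, since the entire argument reduces to reading off a $2\times2$ component system. The only points that demand care are the character computation, where one must not forget the factor $\chi_2(t)=-1$ coming from the non-square $t$, and the clean separation of the degenerate case $\alpha=0$ (whose preimages fill $\Fq$) from the generic case, whose preimages are the purely imaginary leaves.
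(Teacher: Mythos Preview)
Your proof is correct and essentially identical to the paper's: both fix a basis $\{1,\beta\}$ (your $\delta$) with $\beta^2$ a non-square in $\Fq$, derive the key identity $f(x+y\beta)=-2bcy^2-2cxy\beta$, and read off both parts by matching components, including the same character manipulation using $\chi_2(b)=-1$. Your parenthetical remark about part~2 and the case $\alpha=0$ is also apt---the paper's own proof of part~2 treats only the preimages $\gamma=k\beta$ arising from $\alpha\in\Fq^*$, so the statement is implicitly meant for those $\gamma$.
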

\begin{proof}
If $\alpha=0$, we have $c(X^{q+1}-X^2)=0$ if, and only if, $X\in\Fq$, then $\# f^{-1}(0)=q$.

Suppose $\alpha\in\Fqd^*$, and let $\{1,\beta\}$ be a base of $\Fqd$ over $\Fq$, such that $\beta^2=b\in\Fq^*$. Then, if $X\in f^{-1}(\alpha)$, writing $X=x+y\beta$, with $x,y\in\Fq$, we have
\begin{align*}
    \alpha&=c(X^{q+1}-X^2)\\
    &=c(x+y\beta)(x+y\beta^q-x-y\beta)\\
    &=c(x+y\beta)y(\beta^q-\beta)\\
    &=cxy(\beta^q-\beta)-cy^2(\beta^{q+1}-\beta^2).
\end{align*}
Since $\beta^2=b$ is not a square in $\Fq$, we have $\beta^q=b^{\frac{q-1}{2}}\beta=-\beta$ and $\beta^{q+1}=-b$. Then
\begin{equation}
    -2cxy\beta-2bcy^2=\alpha.\label{f1}
\end{equation}

If $\alpha\in\Fq^*$, then $-2cxy=0$ and $-2bcy^2=\alpha$, which implies that $x=0$ and $y^2=-\frac{\alpha}{2bc}$. Since $b$ is not a square, if $\chi_2(-2\alpha c)=1$, then $f^{-1}(\alpha)=\emptyset$. On the other hand, if $\chi_2(-2\alpha c)=-1$, we have $f^{-1}(\alpha)=\{\pm k\beta\}$, where $k^2=-\frac{\alpha}{2bc}$.

Moreover, taking $\gamma\in f^{-1}(\alpha)$, we have $\gamma=k\beta$ for some $k\in\Fq$. By $(\ref{f1})$, if $x\in f^{-1}(\gamma)$, then $$-2cxy\beta-2bcy^2=k\beta,$$ consequently $-2bcy^{2}=0$ and $-2cxy=k$. Since $-2bc\neq 0$, there are no $x$ and $y$ that satisfy the equations and $f^{-1}(\alpha)=\emptyset$.
\end{proof}

With this theorem we can characterize the first connected component of the functional graph of $X\mapsto c(X^{q+1}-X^2)$. We have zero directed to itself, then the $q-1$ elements of $\Fq^*$ directed to zero and two elements of $\Fqd\setminus\Fq$ attached to each of the $\frac{q-1}{2}$ elements of $\Fq^*$ that satisfy $\chi_2(-2\alpha c)=-1$.

\begin{Ex}
Taking $q=13$ and $c=3$, let $\{1,\beta\}$ be a base for $\mathbb{F}_{13^2}$ over $\mathbb{F}_{13}$, such that $\beta^2=11$. Then the connected component that contains the vertex $\langle 0,0\rangle\in\mathbb{F}_{13}\oplus \beta \mathbb{F}_{13}$ is the following 
\begin{center}
\includegraphics[scale=0.6]{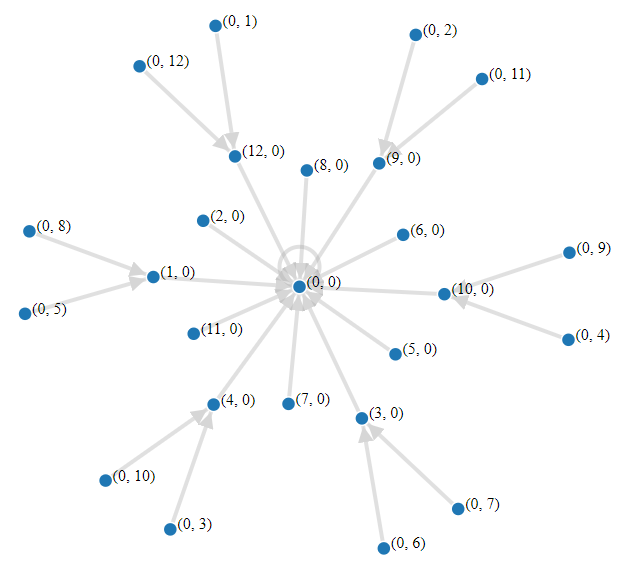}
\end{center}
\end{Ex}

\begin{Df}
For $i,m$ and $n$, positive integers, let 
\begin{enumerate}[a)]
    \item $Cyc(n)$ be the cyclic directed graph of length n;
    \item $\T(1)$ be the tree composed by two points, $P_1$ and $P$, where $P_1$ is directed to $P$. For $m\ge 1$, $\T(m+1)$ is the tree obtained after attaching $2$ points directed to each point in the last level of $\T(m)$;
%    \item $\T(2)$, the directed graph composed by $2$ points $P_1,P_2$ directed to another point $P$. For $m\ge 2$, $\T(m+1)$ is the tree obtained after attaching $2$ points directed to each point in the last level of $\T(m)$;
    \item $(Cyc(n),\T(m))$ be the graph obtained after replacing each point of $Cyc(n)$ by a tree isomorphic to $\T(m)$.
    \item $\Zt(q)$ be the graph $Cyc(1)$ attached to $\frac{q-1}{2}$ trees isomorphic to $\T(1)$ and $\frac{q-1}{2}$ trees isomorphic to $\T(2)$.
\end{enumerate}
\end{Df}

Note that the connected component  of the functional graph of $f(X)=c(X^{q+1}-X^2)$ that contains zero is precisely the graph $\Zt(q)$.

\begin{Le}\label{zd}
 If $\alpha\in\Fqd\setminus\Fq$ then $\# f^{-1}(\alpha)=\{0,2\}$.
\end{Le}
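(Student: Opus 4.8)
The plan is to mimic the computation from the proof of Theorem~\ref{prefq}, but now allowing the target $\alpha$ to have a nonzero $\beta$-component. Fixing the base $\{1,\beta\}$ with $\beta^2=b\in\Fq^*$ a non-square (so that $\beta^q=-\beta$), write $\alpha=\alpha_0+\alpha_1\beta$ with $\alpha_0,\alpha_1\in\Fq$, and $\alpha_1\neq 0$ precisely because $\alpha\in\Fqd\setminus\Fq$. Writing a candidate preimage as $X=x+y\beta$, the same reduction that produced equation~(\ref{f1}) gives
\begin{equation*}
    -2cxy\beta-2bcy^2=\alpha_0+\alpha_1\beta.
\end{equation*}

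First I would split this into the $\Fq$-components, obtaining the system $-2bcy^2=\alpha_0$ and $-2cxy=\alpha_1$. The key observation is that, since $\alpha_1\neq 0$, the second equation forces $y\neq 0$; hence from the first equation $y^2=-\frac{\alpha_0}{2bc}$ must be a nonzero square in $\Fq$, and from the second $x=-\frac{\alpha_1}{2cy}$ is then uniquely determined by the choice of $y$. So the whole question reduces to whether $-\frac{\alpha_0}{2bc}$ is a nonzero square in $\Fq$. If it is a non-square (or if $\alpha_0=0$ while $\alpha_1\neq0$, giving $y^2=0$ but then $y=0$ contradicts $-2cxy=\alpha_1\neq0$), there is no solution and $\#f^{-1}(\alpha)=0$. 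If it is a nonzero square, there are exactly two values of $y$, namely $\pm k$ with $k^2=-\frac{\alpha_0}{2bc}$, and each determines a single $x$, giving exactly two preimages $X=\mp\frac{\alpha_1}{2ck}+(\pm k)\beta$.

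The upshot is that $\#f^{-1}(\alpha)\in\{0,2\}$ in all cases, which is exactly the claim. The argument is essentially a bookkeeping variant of the earlier theorem, so I do not anticipate a genuine obstacle; the only point requiring care is handling the degenerate subcase $\alpha_0=0$ (that is, $\alpha$ purely in the $\beta$-direction), where one must check that $y=0$ is incompatible with $\alpha_1\neq0$ and therefore no preimage exists — so here the count is $0$ rather than accidentally picking up the image of real elements. A clean way to phrase the whole thing is: the number of preimages equals the number of solutions $y\in\Fq^*$ of $y^2=-\frac{\alpha_0}{2bc}$, which is $2$ when the right-hand side is a nonzero square and $0$ otherwise, and never $1$ because $-1$ is paired with its negative in odd characteristic.
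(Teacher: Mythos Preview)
Your proof is correct and follows essentially the same approach as the paper: both decompose $\alpha$ and $X$ in the basis $\{1,\beta\}$, reduce via equation~(\ref{f1}) to the system $-2bcy^2=\alpha_0$, $-2cxy=\alpha_1$, and count solutions according to whether $-\frac{\alpha_0}{2bc}$ is a nonzero square. Your treatment of the degenerate case $\alpha_0=0$ is in fact slightly more explicit than the paper's.
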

\begin{proof}
    We can write $\alpha$ as $\alpha=u+v\beta$, where $u,v\in\Fq$ and $v\neq 0$. We want to know if there is a solution to $c(X^{q+1}-X^2)=\alpha$ in $\Fqd$. Writing $X=x+y\beta$, by $(\ref{f1})$, we just need to solve $$-2bcy^2-2cxy\beta=u+v\beta.$$ Observe that, if there is $y\in\Fq$ such that $y^2=-\frac{u}{2bc}$, than we take $x=-\frac{v}{2cy}$ and we have a solution. 

    If $u=0$, we have proved that $\# f^{-1}(\alpha)=0$. If $u$ is such that $\chi_2(-2bcu)=-1$, then there is no solution and $\# f^{-1}(\alpha)=0$. Conversely, if $\chi_2(-2bcu)=1$, then there exists $y\in\Fq$ such that $y^2=-\frac{u}{2bc}$. Note that $-y$ is also a solution, and since $y^2=-\frac{u}{2bc}$ has at most two solutions, those are the only ones. In conclusion, $\# f^{-1}(\alpha)=\{0,2\}$.
\end{proof}

Suppose that $\alpha\in \Fqd\setminus(\Fq\cup\beta\Fq)$ and that $x+y\beta\in f^{-1}(\alpha)$, hence $-x-y\beta\in f^{-1}(\alpha)$. 
If $q\equiv3\pmod4$, notice that $\chi_2(-1)=-1$, ergo if $\chi_2(-2bcx)=-1$, then $\chi_2(-2bc(-x))=1$. In other words, in %the elements of 
the preimage of $\alpha$, there is one element that has two preimages and one that has none. 

Let $\alpha\in\Fqd$ be non periodic and $j\in\N$ be the minimum such that $a_j:=f^{(j)}(\alpha)$ is a periodic point. As $a_j$ is an element of a cycle, there is a periodic point $w$ such that $f(w)=a_j$, hence $w$ has preimage. Consequently $a_{j-1}$ does not have preimage and $a_{j-1}=\alpha$. In conclusion, if $q\equiv3\pmod4$, for each connected components different from $\Zt(q)$ there exists $n\in\N$ such that the component is  isomorphic to $(Cyc(n), \T(1))$.

\begin{Ex}
Taking $q=7$ and $c=4$, the two distinct cycles other than $\Zt(q)$ that appear in the functional graph are isomorphic to the following
\begin{center}
\includegraphics[scale=0.4]{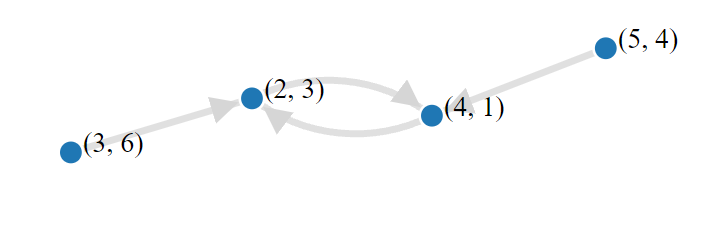}
\includegraphics[scale=0.6]{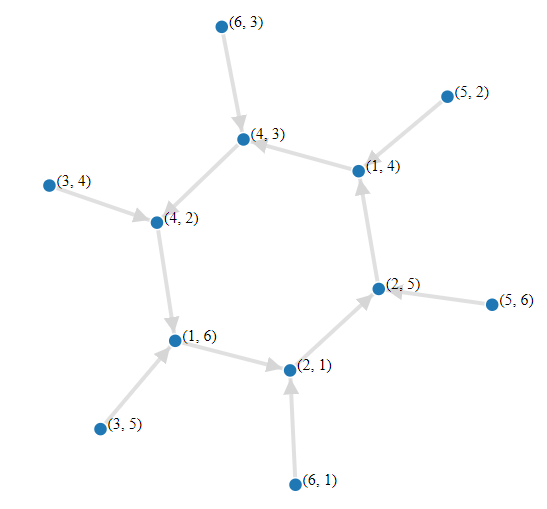}
\end{center}
which are $(Cyc(2), \T(1))$ and $(Cyc(6), \T(1))$ respectively. 
\end{Ex}

If $q\equiv1\pmod 4$, then $\chi_2(-1)=1$. Hence either $\chi_2(-2bcx)=-1$ and $\chi_2(-2bc(-x))=-1$, or  $\chi_2(-2bcx)=1$ and  $\chi_2(-2bc(-x))=1$. Which means that  all the elements of the preimage of $\alpha$ have a preimage or none of them do.

\begin{Ex}
Taking $q=13$ and $c=3$, the connected components different from $\Zt(q)$ are isomorphic to

\begin{center}
\includegraphics[scale=0.5]{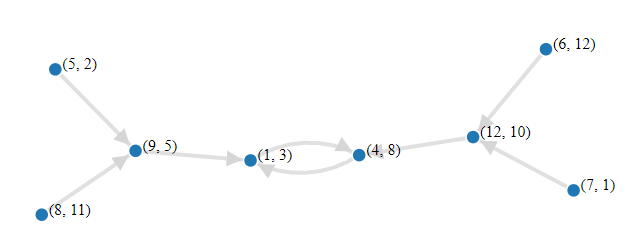}
\includegraphics[scale=0.7]{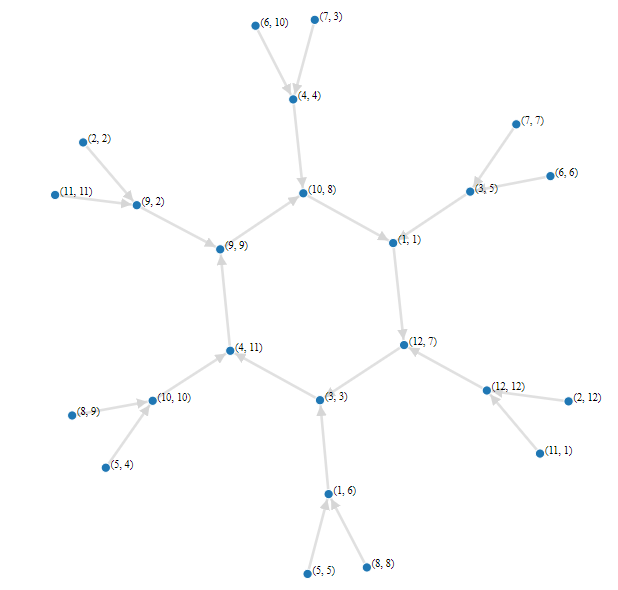}
\end{center}
which are $(Cyc(2), \T(2))$ and $(Cyc(6), \T(2))$ respectively. 
\end{Ex}

We will now find the number of cycles of each length, but first we need the following lemma.

\begin{Le}\label{EvenCycle}
 Every cycle length greater than $1$ is even.
\end{Le}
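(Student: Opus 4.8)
The plan is to follow the quadratic character of the imaginary part of a point as it travels around a cycle, and to show that this character must reverse sign at every step. Writing $X_n = f^{(n)}(X) = x_n + y_n\beta$ with $x_n,y_n\in\Fq$, equation $(\ref{f1})$ gives $f(X_n) = -2bcy_n^2 - 2cx_ny_n\beta$, so the iteration is governed by the coupled recursions $x_{n+1} = -2bcy_n^2$ and $y_{n+1} = -2cx_ny_n$. The conclusion will come from comparing the character of $y_n$ after one full revolution.

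First I would pin down where a point of a cycle of length at least $2$ can live. Every such point is the image of the preceding point of the cycle, and that predecessor is itself periodic, so it too must possess a preimage. By Theorem $\ref{prefq}$, a point of $\Fq^*$ has all of its preimages of the form $\pm k\beta\in\beta\Fq$, and these have no preimages; moreover a point $k\beta\in\beta\Fq\setminus\{0\}$ satisfies $f(k\beta)=-2bck^2\in\Fq^*$, so it is itself a preimage of an element of $\Fq$ and hence, by part 2 of Theorem $\ref{prefq}$, has no preimage. Consequently a point of $\Fq^*$ cannot sit on a cycle of length $\ge 2$ (its predecessor would lie in $\beta\Fq$, yet would still need a preimage of its own), and likewise no point of $\beta\Fq\setminus\{0\}$ can, since it has no preimage at all; the fixed point $0$ accounts for the only $1$-cycle. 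Hence on a cycle of length $\ell\ge 2$ every point lies in $\Fqd\setminus(\Fq\cup\beta\Fq)$, so $x_n,y_n\neq 0$ and $\chi_2(y_n)\neq 0$ for all $n$.

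With the coordinates known to be nonzero, the sign flip is a short computation. Because the cycle is periodic, each $x_n$ equals $-2bcy_{n-1}^2$ with $y_{n-1}\neq 0$, whence $\chi_2(x_n)=\chi_2(-2bc)$ for every $n$. Substituting into $y_{n+1}=-2cx_ny_n$ yields
\begin{equation*}
\chi_2(y_{n+1})=\chi_2(-2c)\,\chi_2(x_n)\,\chi_2(y_n)=\chi_2(4bc^2)\,\chi_2(y_n)=\chi_2(b)\,\chi_2(y_n)=-\chi_2(y_n),
\end{equation*}
the last equality because $b=\beta^2$ is a nonsquare in $\Fq$. Thus $\chi_2(y_n)$ reverses at each step. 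Traversing a cycle of length $\ell$ once returns $y_\ell=y_0$ yet multiplies $\chi_2(y_0)$ by $(-1)^\ell$; since $\chi_2(y_0)=\pm1\neq 0$, this forces $(-1)^\ell=1$, i.e.\ $\ell$ is even.

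The only delicate point is the middle step: guaranteeing that no coordinate vanishes along the cycle, so that each $\chi_2(y_n)$ is genuinely $\pm 1$ rather than $0$. This is exactly what the preimage structure of Theorem $\ref{prefq}$ supplies, and once it is secured the alternation of signs, and hence the parity conclusion, is immediate.
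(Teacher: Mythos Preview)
Your proof is correct, and it takes a genuinely different route from the paper's. The paper computes $f^{(2)}(x,y)=g(x,y)\langle x,y\rangle$ with $g(x,y)=-8bc^3xy^2$, then shows by induction that $f^{(2n)}(x,y)=g(x,y)^{(4^n-1)/3}\langle x,y\rangle$ and $f^{(2n+1)}(x,y)=k\langle by,x\rangle$ for a scalar $k\in\Fq$; an odd cycle would force $\langle by,x\rangle$ parallel to $\langle x,y\rangle$, i.e.\ $b=(x/y)^2$, contradicting that $b$ is a nonsquare. Your argument instead tracks the quadratic character of the $\beta$-coordinate and shows it flips sign at every step, which is slicker and more conceptual for this lemma alone. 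The trade-off is that the paper's explicit formula for $f^{(2n)}$ is not incidental: it is reused immediately in Theorem~\ref{T9} to determine the exact cycle lengths via the condition $g(x,y)^{(4^n-1)/3}=1$. So while your approach proves the parity statement more cleanly, the paper's computation is doing double duty and feeds directly into the next result.
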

\begin{proof}
    We have proved that if $X=x+y\beta\in\Fq$ and $x=0$ or $y=0$, then $X$ is attached to a cycle of length $1$. Therefore, suppose $x,y\neq 0$. By $(\ref{f1})$, we have $f(x+y\beta)=-2bcy^2-2cxy\beta$. Then, looking at $\Fqd$ as a vector space over $\Fq$ with base $\{1,\beta\}$, we have 
    \begin{align}
    (\Fq^*)^2&\overset{f}{\to} (\Fq^*)^2\nonumber\\
    \langle x,y\rangle &\mapsto \langle -2bcy^2,-2cxy\rangle \label{f1v}
    \end{align}
    and, applying $f$ again,
    \begin{align}
    f^{(2)}(x,y)&=\langle -8bc^3x^2y^2,-8bc^3xy^3\rangle \nonumber\\
    &=-8bc^3xy^2\langle x,y\rangle .\label{f2v}
\end{align}
    Writing $g(x,y)=-8bc^3xy^2$, for all $v\in\Fq$, we get that \begin{align*}
        f^{(2)}(vx,vy)&=g(vx,vy)\langle vx,vy\rangle \nonumber\\
        &=v^4g(x,y)\langle x,y\rangle .
    \end{align*}
Furthermore, taking $v=g(x,y)$, we have\begin{align*}
        f^{(4)}(x,y)&=f^{(2)}(g(x,y)\langle x,y\rangle )\\&=g(x,y)^5\langle x,y\rangle . \end{align*}
Therefore, by induction, we conclude that \begin{align}
    f^{(2n)}(x,y)&=g(x,y)^{1+4+4^2+\D+4^{n-1}}\langle x,y\rangle \nonumber\\
    &=g(x,y)^{\frac{4^{n}-1}{3}}\langle x,y\rangle \label{f2nv}.
\end{align}
Applying $f$ once more, it follows that \begin{align*}
    f^{(2n+1)}(x,y)&=f(g(x,y)^{\frac{4^{n}-1}{3}}\langle x,y\rangle )\\
    &=k_{x,y,n}\langle by,x\rangle ,
\end{align*}
where $k_{x,y,n}=-2cy_1g(x,y)^{\frac{2(4^{n}-1)}{3}}$. Notice that $k_{x,y,n}\in\Fq$. 

If there is a cycle of odd length, then we must have $k_{x,y,n}\langle by,x\rangle =\langle x,y\rangle $, for some $\langle x,y\rangle \in(\Fq^*)^2$, which implies that  $\frac{by}{x}=\frac{x}{y}$ and $b=(\frac{x}{y})^2$. Since $b$ is not a square in $\Fq$, we are through.
\end{proof}

\begin{Th}\label{T9}
 Let $q-1=2^sr$, with $r$ odd. Then for every $d$, divisor of $r$, there are $\frac{\varphi(d)(q-1)}{2\ord_{3d}(4)}$ cycles of length $2\ord_{3d}(4)$, and those are the only cycles of length greater than $1$.
\end{Th}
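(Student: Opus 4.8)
The plan is to reduce everything to the action of $f$ on the coordinate pairs $\langle x,y\rangle\in(\Fq^*)^2$ already isolated in the proof of Lemma \ref{EvenCycle}, and then to read off both the cycle lengths and the cycle counts from the single scalar invariant $g(x,y)=-8bc^3xy^2$. First I would dispose of the points that cannot lie on a nontrivial cycle. Every $X\in\Fq$ satisfies $f(X)=0$, and every $X=k\beta\in\beta\Fq$ satisfies $f(X)=-2bck^2\in\Fq$; hence all of $\Fq\cup\beta\Fq$ is swept into the component $\Zt(q)$ and carries only the cycle of length $1$. Consequently every cycle of length greater than $1$ consists of pairs $\langle x,y\rangle$ with $x,y\neq 0$, and I may work entirely inside $(\Fq^*)^2$ using $f(x,y)=\langle -2bcy^2,-2cxy\rangle$ and $f^{(2n)}(x,y)=g(x,y)^{(4^n-1)/3}\langle x,y\rangle$.

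Next comes the periodicity criterion. Since $x,y\neq0$, the identity $f^{(2n)}(x,y)=g(x,y)^{(4^n-1)/3}\langle x,y\rangle$ shows that $\langle x,y\rangle$ returns to itself after $2n$ steps exactly when $g(x,y)^{(4^n-1)/3}=1$. Writing $d=\ord(g(x,y))$ for the multiplicative order of $g(x,y)$ in $\Fq^*$, this says $d\mid\frac{4^n-1}{3}$, equivalently $3d\mid 4^n-1$. Such an $n$ exists if and only if $4$ is a unit modulo $3d$, i.e. if and only if $d$ is odd; and when $d$ is odd the least such $n$ is $\ord_{3d}(4)$. Combining this with Lemma \ref{EvenCycle} (no odd cycle length greater than $1$) and the fact that $0$ is the only fixed point, I conclude that $\langle x,y\rangle$ is periodic precisely when $d$ is odd, in which case its exact minimal period is $2\ord_{3d}(4)$.

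To turn this pointwise statement into a count I would check that $d=\ord(g(x,y))$ is constant along an orbit. A direct computation gives $g(f(x,y))=g(x,y)^2$, so $g$ is squared at each step; as $d$ is odd, $\gcd(2,d)=1$ and the order is unchanged along the whole cycle, so the invariant $d$ is genuinely attached to each cycle and every point of a cycle with invariant $d$ has the same period $2\ord_{3d}(4)$. It then remains to count, for each odd $d\mid q-1$ (equivalently $d\mid r$), the pairs with $\ord(g(x,y))=d$. The map $(x,y)\mapsto g(x,y)=-8bc^3xy^2$ has fibers of constant size $q-1$ over $\Fq^*$: for any target $h\in\Fq^*$ and any $y\in\Fq^*$ the value $x=h/(-8bc^3y^2)$ is uniquely determined and nonzero. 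Since $\Fq^*$ is cyclic of order $q-1$, exactly $\varphi(d)$ elements have order $d$, so there are $(q-1)\varphi(d)$ periodic pairs with invariant $d$; partitioning them into cycles of common length $2\ord_{3d}(4)$ yields $\frac{\varphi(d)(q-1)}{2\ord_{3d}(4)}$ cycles. Letting $d$ range over the divisors of $r$ exhausts all cycles of length greater than $1$, since pairs with even $\ord(g(x,y))$ are non-periodic (they sit on the trees) and the points of $\Fq\cup\beta\Fq$ were already accounted for in $\Zt(q)$.

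The main obstacle I anticipate is the bookkeeping in the periodicity criterion: one must argue that $2\ord_{3d}(4)$ is the exact minimal period rather than merely a period, which is where both Lemma \ref{EvenCycle} (to exclude an odd period) and the invariance of $d$ along the orbit (to guarantee a uniform cycle length within each class) are essential. Once these are in place, the fiber-counting and the consistency check $\sum_{d\mid r}\varphi(d)(q-1)=(q-1)r$ for the total number of periodic points are routine.
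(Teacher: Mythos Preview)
Your proof is correct and follows essentially the same route as the paper: both arguments reduce to the scalar $g(x,y)=-8bc^3xy^2$ via the identity $f^{(2n)}(x,y)=g(x,y)^{(4^n-1)/3}\langle x,y\rangle$, translate periodicity into the condition $3d\mid 4^n-1$ with $d=\ord(g(x,y))$ odd, and then count by observing that $g$ has fibers of size $q-1$ over $\Fq^*$ while exactly $\varphi(d)$ elements have order $d$. Your version is a bit more explicit in a few places---you isolate $\Fq\cup\beta\Fq$ up front, verify directly that $g(f(x,y))=g(x,y)^2$ so that $d$ is orbit-invariant, and spell out why $2\ord_{3d}(4)$ is the exact (not merely a) period---but these are refinements of the same argument rather than a different approach.
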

\begin{proof}
    Suppose $\langle x,y\rangle $ is an element of a cycle of length $m>1$. Then, by Theorem $\ref{EvenCycle}$, $m=2n$ for some $n\in\N$ and $f^{(2n)}(X)=X$, which implies that $(-8bc^3xy^2)^\frac{4^{n}-1}{3}=1$. Notice that, if $-8bc^3xy^2=\theta\in\Fq^*$, then $n$ is the minimum such that $\frac{4^{n}-1}{3}\equiv0\pmod{\ord(\theta)}$, which is equivalent to $4^{n}\equiv1\pmod{3\ord(\theta)}$. Since there exists such $n$ if, and only if, $\ord(\theta)$ is odd, then, for every $2n$-cycle, there is a $d$, odd divisor of $q-1$, i.e. divisor of $r$, such that $n=\ord_{3d}(4)$. Now we will count the number of cycles of such length.
    
    Now, suppose that $\alpha$ is a generator of $\Fq^*$ and $\alpha^j$ is an element of order $d$. Since
    $$\ord(\alpha^j)=\frac{q-1}{\gcd(q-1,j)}:=d,$$ 
    then $2^s$ divides $\gcd(q-1,j)$, hence $j=2^st$ with $1\le t\le \frac{q-1}{2^s}$. 
    In addition, for any $\langle x,y\rangle\in \Fq^2$, such that $-8bc^3xy^2=\alpha^j$, the element $x+\beta y\in \Fqd$ is a node of a $2\ord_{3d}(4)$-cycle. 
    
    Notice that $t$ satisfies $\gcd(\frac{q-1}{2^s},t)=\frac{q-1}{2^sd}$ if, and only if, $t=\frac{q-1}{2^sd}t'$ with $\gcd(t',d)=1$. Since $1\leq t\leq \frac{q-1}{2^s}$, then $1\leq t'\leq d$. Hence there are $\varphi(d)$ choices for $t$, and thus $\varphi(d)$ choices for $j$ such that $\ord(\alpha^j)=d$. 
    
    Furthermore, for every $j$, there are $q-1$ ordered pairs $\langle x,y\rangle $ such that $-8bc^3xy^2=\alpha^j$. Consequently the number of elements in $2\ord_{3d}(4)$-cycles is $\varphi(d)(q-1)$. Since there are $2\ord_{3d}(4)$ elements in each cycle, there are $\frac{\varphi(d)(q-1)}{2\ord_{3d}(4)}$ cycles.
\end{proof}
\begin{Co}
 There are $\frac{q-1}{2}$ cycles of length two.
\end{Co}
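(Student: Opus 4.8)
The plan is to obtain this as an immediate specialization of Theorem~\ref{T9} to the divisor $d=1$, so the work is essentially bookkeeping rather than a fresh argument. Theorem~\ref{T9} asserts that, for each divisor $d$ of the odd part $r$ of $q-1$, there are exactly $\frac{\varphi(d)(q-1)}{2\ord_{3d}(4)}$ cycles of length $2\ord_{3d}(4)$, and that these exhaust all cycles of length greater than $1$. A cycle therefore has length two precisely when $2\ord_{3d}(4)=2$, i.e.\ when $\ord_{3d}(4)=1$, so the first step is to determine which divisors $d$ of $r$ satisfy this order condition.

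The second step is to solve $\ord_{3d}(4)=1$ among positive divisors $d$ of $r$. This condition says $4\equiv 1\pmod{3d}$, equivalently $3d \mid 3$; since $d\ge 1$ forces $3d\ge 3$, the only possibility is $3d=3$, that is $d=1$. In particular $d=1$ is the \emph{unique} divisor of $r$ producing cycles of length two, so there is no risk of missing a contribution from a larger divisor or of double counting.

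The final step is to evaluate the formula of Theorem~\ref{T9} at $d=1$. Here $\varphi(1)=1$ and, since $4\equiv 1\pmod 3$, we have $\ord_{3}(4)=1$, so the cycle length is $2\ord_{3}(4)=2$ and the corresponding count is
\[
\frac{\varphi(1)(q-1)}{2\ord_{3}(4)}=\frac{q-1}{2},
\]
which is exactly the claimed number. I do not anticipate any genuine obstacle in this argument; the only point requiring a word of justification is the uniqueness established in the second step, which guarantees that the single value $d=1$ accounts for every two-cycle and hence that the tally is not inflated by other divisors.
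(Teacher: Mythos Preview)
Your proof is correct and follows the same approach as the paper: specialize Theorem~\ref{T9} to $d=1$, and verify that no other divisor of $r$ can yield $\ord_{3d}(4)=1$. Your uniqueness argument via $3d\mid 3$ is in fact slightly cleaner than the paper's, which instead observes that any divisor $d\ge 3$ of $r$ gives $3d\ge 9$ and hence $\ord_{3d}(4)>1$.
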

\begin{proof}
    Since $1$ is always an odd divisor of $q-1$, taking $d=1$ we have $2\ord_{3d}(4)=2\ord_3(4)=2$. Notice that if $d\geq 3$ is a divisor of $r$, then $3d\geq 9$ and $2\ord_{3d}(4)\neq 2$, as $\ord_{3d}(4)>1$.
    Therefore, the number of $2$-cycles is $\frac{\varphi(1)q-1}{2}=\frac{q-1}{2}$.
\end{proof}

Now that we have found every cycle length of this functional graph we only need to describe the pre-cycles.
\begin{Df}\label{cycg}
Let $Cyc(\G)= (\V,\E)$ be the subgraph of $\G\setminus\Zt(q)$ such that $\V$ is the set of periodic points and $\E=\{\langle x,f(x)\rangle | x\in \V\}$.
\end{Df}
\begin{Th}\label{precyc}
 If $q-1=2^sr$, then, for every node of $Cyc(\G)$, there is a tree isomorphic to $\T(s)$ attached to it. 
\end{Th}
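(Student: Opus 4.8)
The plan is to collapse the entire backward structure onto a single scalar invariant that transforms multiplicatively. For a node $Q=x+y\beta\in(\Fq^*)^2$ set $\theta(Q)=-8bc^3xy^2=g(x,y)$, the quantity from Lemma \ref{EvenCycle}. First I would record three elementary facts. (i) A direct computation from the coordinate form of $f$ in \eqref{f1v} gives $\theta(f(Q))=\theta(Q)^2$, so $f$ squares $\theta$. (ii) Writing $\theta(Q)=(-2bcx)(2cy)^2$ shows $\chi_2(\theta(Q))=\chi_2(-2bcx)$, so by Lemma \ref{zd} a node $Q$ has a preimage — and then exactly two — if and only if $\theta(Q)$ is a square in $\Fq^*$. (iii) Since $f$ is even, the two preimages of $Q$ are $\pm\langle x',y'\rangle$, and their $\theta$-values are $\pm\theta'$ with $(\theta')^2=\theta(Q)$; these are the two \emph{distinct} square roots of $\theta(Q)$ (distinct because $\theta'\neq0$ and $\mathrm{char}(\Fq)\neq2$). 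Combining (ii) and (iii), the map $Q\mapsto\theta(Q)$ carries the preimage set of any tree node bijectively onto the set of square roots of $\theta(Q)$ in $\Fq^*$.

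Next I would use this to identify the tree attached to a periodic point $P$ (a node of $Cyc(\G)$, hence in $(\Fq^*)^2$ by Lemma \ref{EvenCycle}). Because children-sets correspond bijectively under $\theta$ and we are inside a functional graph, so the non-cyclic backward orbit is genuinely a tree, the map $Q\mapsto\theta(Q)$ is a tree isomorphism from the backward tree of $P$ onto the tree of iterated square roots of $\theta(P)$ in $\Fq^*$. It therefore suffices to analyse the latter. Here I would pass to the decomposition $\Fq^*\cong\Z/2^s\times\Z/r$ coming from $q-1=2^sr$, under which squaring becomes doubling on each factor. Since $r$ is odd, doubling is a bijection on $\Z/r$, so it never obstructs the existence of square roots; the entire branching and leaf behaviour is governed by doubling on $\Z/2^s$.

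Finally I would run the valuation count on $\Z/2^s$. By (the converse in) Theorem \ref{T9} a periodic point has $\ord(\theta(P))$ odd, i.e. the $\Z/2^s$-component of $\theta(P)$ is $0$. Its two square roots have $\Z/2^s$-components $0$ and $2^{s-1}$: the odd-order one corresponds to a periodic node, necessarily the cyclic predecessor of $P$, while the component-$2^{s-1}$ one is the unique non-cyclic preimage $P_1$, the first vertex of the attached tree. The $\Z/2^s$-component of $\theta(P_1)$ has $2$-adic valuation exactly $s-1$, and under doubling on $\Z/2^s$ an element of valuation $v\ge1$ has exactly two preimages, both of valuation $v-1$, whereas an element of valuation $0$ has none. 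Starting from valuation $s-1$ at $P_1$, each backward step lowers the valuation by one and doubles the number of vertices, so level $k$ contains $2^{k-1}$ vertices for $1\le k\le s$ and the vertices at level $s$ (valuation $0$) are leaves. This is exactly $\T(s)$ attached to $P$.

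The main obstacle lives in the first paragraph: establishing that $Q\mapsto\theta(Q)$ is a genuine tree isomorphism — most delicately that the two node-preimages realise the two distinct square roots of $\theta(Q)$, so that children-sets match in size and nothing collapses. Once that reduction is secured, the depth being exactly $s$ is forced by the transparent valuation bookkeeping for doubling on $\Z/2^s$, which is precisely where the hypothesis $q-1=2^sr$ enters.
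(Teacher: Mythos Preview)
Your proof is correct and takes a genuinely different route from the paper's. The paper argues by explicit coordinate induction: it fixes a periodic chain $\langle x_i,y_i\rangle$ with $f(x_i,y_i)=\langle x_{i-1},y_{i-1}\rangle$, shows that the non-periodic node at depth $k$ has the form $\zeta_{2^{k-1}}\langle x_{k-1},y_{k-1}\rangle$, and then checks directly that such a node has a preimage iff $\zeta_{2^{k-1}}$ is a square in $\Fq$, i.e.\ iff $q\equiv1\pmod{2^k}$. You instead exploit the semiconjugacy $\theta\circ f=\theta^2$ (your fact (i)), which collapses the backward tree at a periodic point onto the tree hanging off a periodic point of the squaring map on $\Fq^*$; the depth $s$ then drops out of the decomposition $\Fq^*\cong\Z/2^s\times\Z/r$ and the transparent doubling dynamics on $\Z/2^s$. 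Your version is more conceptual---it explains \emph{why} $s$ appears rather than verifying it step by step---and transfers immediately to the $a=1$ case via the invariant $2cx$ from \eqref{f6}, whereas the paper has to rerun the coordinate argument there. One phrasing caveat: calling $Q\mapsto\theta(Q)$ a ``tree isomorphism'' is slightly loose, since $\theta$ need not be globally injective on the backward tree; but what you actually establish and use in (iii)---that the two preimages of $Q$ realise the two distinct square roots of $\theta(Q)$---is exactly the level-by-level bijection of children needed to match the tree shapes, so the argument stands.
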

\begin{proof}
    Let  $d$ be a divisor of $r$. Once again let $\Fqd$ be generated by $\{1,\beta\}$ over $\Fq$, where $\beta^2=b\in\Fq$. We will look at $\Fqd$ as a vector space over $\Fq$. Let $\langle x_0,y_0\rangle$ be a periodic element and, for all $i\geq 1$, let $\langle x_i,y_i\rangle$ be a periodic point such that $$f(x_i,y_i)=\langle x_{i-1},y_{i-1}\rangle.$$ As seen in Lemma $\ref{zd}$, $\langle -x_i,-y_i\rangle $ also satisfies $f(-x_i,-y_i)=\langle x_{i-1},y_{i-1}\rangle$, but is not a periodic element. Suppose that $\langle x,y\rangle \in f^{-1}(-x_1,-y_1)$.
     
    \begin{center}
        \includegraphics[scale=0.6]{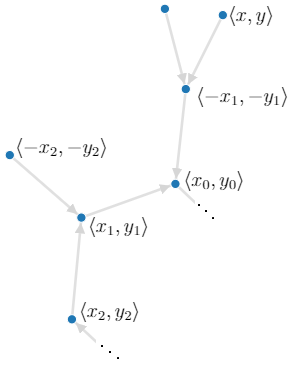}
    \end{center}
    
    Using $(\ref{f1v}),$ we have $-2bcy^2=-x_1$ and $-2cxy=-y_1$. But $-2bcy_2^2=x_1$ and $-2cx_2y_2=y_1$. Then, $-2bcy^2=2bcy_2^2$ and $-2cxy=2cx_2y_2$, which implies that $y^2=-y_2^2$. Therefore, $\langle -x_1,-y_1\rangle $ has nonempty preimage if, and only if, $-1$ is a square in $\Fq$ or, equivalently, if, and only if, $q\equiv 1\pmod 4$. If $q\equiv 1\pmod 4$, then $y=\pm\zeta_4y_2$ and $x=\pm\zeta_4x_2$, where $\zeta_j$ is a primitive $j$-th root of unity in $\Fq$.
    
    Suppose, by induction hypothesis, that for each $0<k\le s$, 
    %By induction, suppose that $q\equiv1\pmod{2^{k-1}}$. This implies that
    %$\zeta_{2^{k-1}}\in\Fq$ and that 
    the pair $\langle \zeta_{2^{k-1}}x_{k-1},\zeta_{2^{k-1}}y_{k-1}\rangle$ is a non periodic element that satisfies $$f^{(k-1)}(\zeta_{2^{k-1}}x_{k-1},\zeta_{2^{k-1}}y_{k-1})=\langle x_0,y_0\rangle.$$ If $\langle x,y\rangle \in f^{-1}(\zeta_{2^{k-1}}x_{k-1},\zeta_{2^{k-1}}y_{k-1})$, then $-2bcy^2=\zeta_{2^{k-1}}x_{k-1}$ and, since $2bcy_k^2=x_{k-1}$ by definition, we have $2bcy^2=\zeta_{2^{k-1}}2bcy_k^2$. It follows that $y^2=\zeta_{2^{k-1}}y_k^2$ and, consequently, $f^{-1}( \zeta_{2^{k-1}}x_{k-1},\zeta_{2^{k-1}}y_{k-1})\neq\emptyset$ if, and only if, $\zeta_{2^{k-1}}$ is an square in $\Fq$, that is equivalent to $q\equiv 1\pmod {2^k}$.
    
    This shows that, if $q-1=2^sr$, where $r$ is odd, then, for every $\langle x_0,y_0\rangle $ in $Cyc(\G)$, there is a tree $\T(k)$ attached to it. Since this goes for every element of every cycle, this concludes our proof.
\end{proof}

Now, we have the whole functional graph of $f(X)=c(X^{q+1}-X^2)$ over $\Fqd$. First, there is a component isomorphic to $\Zt(q)$, where zero is the fixed point, and writing $q-1=2^sr$, for every $d$ divisor of $r$, there are $\frac{\varphi(d)(q-1)}{2\ord_{3d}(4)}$ connected components  isomorphic to $(Cyc(2\ord_{3d}(4)),\T(s))$. In conclusion we have the following theorem

\begin{Th}
Let $q$ be a power of an odd prime with $q-1=2^sr$ and $r$ odd.  The functional graph of the function $f(X)=c(X^{q+1}-X^2)$ over $\Fqd$, with $c\in \Fq$, is isomorphic to 
    $$\G=\Zt(q)\bigoplus_{d\mid r}\frac{\varphi(d)(q-1)}{2\ord_{3d}(4)}\times\big(Cyc(2\  \ord\nolimits_{3d}(4)),\T(s)\big).$$
\end{Th}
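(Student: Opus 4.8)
The plan is to assemble the structural results proved above into a single graph isomorphism, using the elementary fact that in any functional graph every vertex has out-degree one, so the orbit of each point eventually enters a unique cycle and each connected component of $\G$ is exactly that cycle together with all pre-cycle vertices flowing into it. Thus it suffices to (i) identify every cycle, (ii) describe the tree hanging off each cyclic node, and (iii) verify that the resulting list of components exhausts $\Fqd$. I would organize the argument around these three tasks and close with a cardinality check that guarantees completeness.

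First I would isolate the component of the fixed point. Since every $x\in\Fq$ satisfies $f(x)=c(x^{q+1}-x^{2})=0$, and every element of $\beta\Fq$ maps into $\Fq$ by $(\ref{f1v})$, the set $\Fq\cup\beta\Fq$ is precisely the component of $0$; together with Theorem~\ref{prefq} this component is $\Zt(q)$, as already recorded. For the remaining components I would invoke Lemma~\ref{EvenCycle} and Theorem~\ref{T9}: the cycles of length greater than $1$ are exactly those of length $2\ord_{3d}(4)$ as $d$ runs over the divisors of $r$, with $\frac{\varphi(d)(q-1)}{2\ord_{3d}(4)}$ cycles of each such length. By Theorem~\ref{precyc} every node of $Cyc(\G)$ carries an attached tree isomorphic to $\T(s)$, and since by definition $(Cyc(n),\T(m))$ is obtained by planting a copy of $\T(m)$ at each node of an $n$-cycle, each non-central component is isomorphic to $(Cyc(2\ord_{3d}(4)),\T(s))$ for the corresponding $d$.

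The point that still requires justification, and which I expect to be the crux, is \emph{completeness}: that the listed components cover every element of $\Fqd$ and that the attached trees are genuinely $\T(s)$ rather than something larger. Termination at depth $s$ is exactly the content of the inductive step in Theorem~\ref{precyc}, where a preimage at a new level exists iff the relevant power of the primitive root of unity is a square in $\Fq$, i.e. while $2^{k}\mid q-1$; as $q-1=2^{s}r$ with $r$ odd, the tree stops growing at depth $s$ and is exactly $\T(s)$. To confirm nothing is omitted I would compare cardinalities. Using $\#\T(m)=2^{m}$ (the root, one node at level $1$, and a full binary growth of $2^{k-1}$ nodes at each level $1\le k\le m$), one gets $\#\Zt(q)=2q-1$, while each $(Cyc(2\ord_{3d}(4)),\T(s))$ has $2\ord_{3d}(4)\cdot 2^{s}$ vertices, so the non-central components contribute
$$\sum_{d\mid r}\frac{\varphi(d)(q-1)}{2\ord_{3d}(4)}\cdot 2\ord_{3d}(4)\cdot 2^{s}=2^{s}(q-1)\sum_{d\mid r}\varphi(d)=2^{s}(q-1)r=(q-1)^{2}.$$
Adding $\#\Zt(q)=2q-1$ yields $(q-1)^{2}+2q-1=q^{2}=\#\Fqd$, so the enumerated components partition the entire vertex set. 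Once this count matches, the pairwise disjointness of connected components forces the claimed isomorphism, with no room for extra cycles or unaccounted pre-cycle vertices.
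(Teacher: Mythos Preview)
Your proposal is correct and follows the same assembly strategy as the paper: the theorem is stated there as an immediate consequence of Theorem~\ref{prefq}, Lemma~\ref{EvenCycle}, Theorem~\ref{T9}, and Theorem~\ref{precyc}, with no further argument beyond the summarizing paragraph preceding it. Your added cardinality check $(q-1)^{2}+(2q-1)=q^{2}$ is a welcome addition that the paper omits; it cleanly certifies that the listed components exhaust $\Fqd$ and leaves no room for stray vertices or overlooked cycles.
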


\begin{Ex}
Once more, let $q=13$ and $c=3$. Notice that $q-1=2^2\cdot 3$, then we have $\T(2)$ attached to each periodic point. Since the odd divisors of $q-1$ are $1$ and $3$, the cycle lengths must be $2\ord_{3}(4)=2$ and $2\ord_{9}(4)=6$. Finally, we conclude that the functional graph is composed by one component isomorphic to
\begin{center}
\includegraphics[scale=0.4]{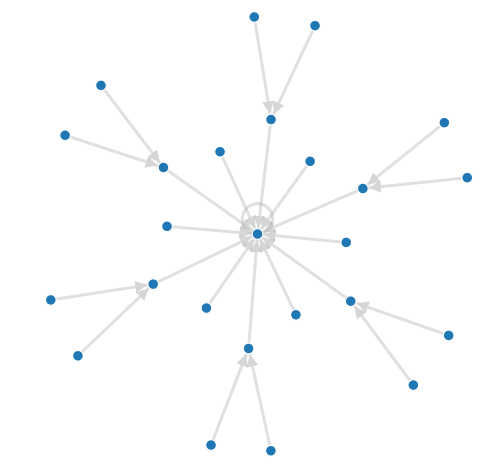}    
\end{center}
 $\frac{q-1}{2}=6$ components isomorphic to
\begin{center}
\includegraphics[scale=0.4]{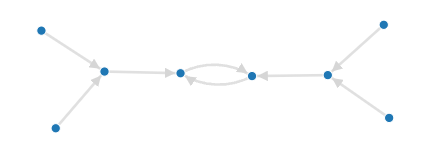}    
\end{center}
and $\frac{\varphi(d)(q-1)}{6}=\frac{2\cdot12}{6}=4$ components isomorphic to 
\begin{center}
\includegraphics[scale=0.4]{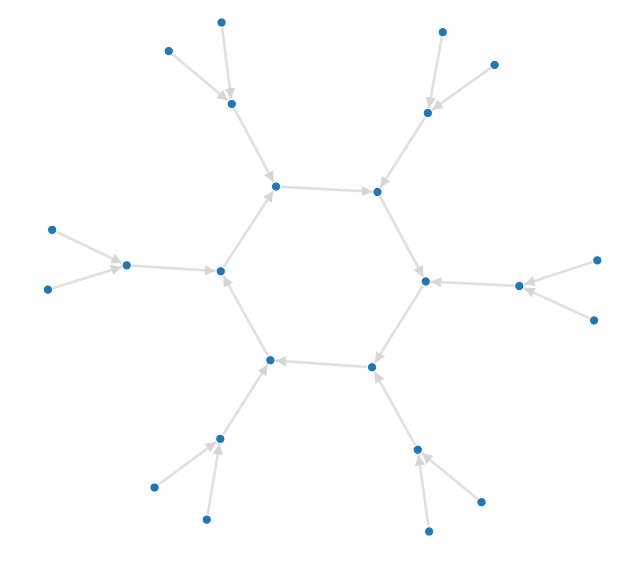}
\end{center}
In other words, the functional graph of $f(X)=3(X^{14}-X^2)$ over $\mathbb{F}_{13^2}$ is $$\G=\Zt(13)\oplus 4\times(Cyc(6),\T(2))\oplus 6\times(Cyc(2),\T(2)).$$

\end{Ex}

\section{The functional graph of $f(X)=c(X^{q+1}+X^2)$}

Our second case will be for $a=1$. We will follow the same path as the first case, establishing the connected component that contains zero, then the cycle lengths and number of cycles and, finally, the behavior of the pre-cycles.

Observe that, writing $X=x+y\beta$, we have \begin{align*}
    f(X)&=c(x+y\beta)(x+y\beta^q+x+y\beta)\\
    &=c(x+y\beta)(x-y\beta+x+y\beta)\\
    &=2cx^2+2cxy\beta,
\end{align*}
since $\beta^q=-\beta$. Then, looking at $\Fqd$ as a vector space over $\Fq$ we get \begin{align}
    (\Fq)^2&\to(\Fq)^2\nonumber\\
    \langle x,y\rangle&\mapsto\langle 2cx^2,2cxy\rangle.\label{f5}
\end{align} 

A direct consequence is that $\langle x,y\rangle $ is a fixed point if, and only if, $x=y=0$ or $x=(2c)^{-1}$ and $y$ is an arbitrary element of $\Fq$. Thus, there are $q+1$ fixed points: zero and $\langle (2c)^{-1},y\rangle ,$ for all $y\in\Fq$.

Observe that $f(X)=0$ if, and only if, $x=0$. In other words, $f^{-1}(0)=\{\langle 0,y\rangle \mid y\in\Fq\}$, hence $\#f^{-1}(0)=q$. It follows directly from $(\ref{f5})$ that $\#f^{-1}(0,y)=\emptyset$, if $y\neq0$. In conclusion, we have a theorem analogue to Theorem \ref{prefq}.

\begin{Th}
 If $\alpha\in\beta\Fq$, then $$\#f^{-1}(\alpha)=\begin{cases}q,&\text{if}\ \alpha=0\\ 0,&\text{if}\ \alpha\neq 0.\end{cases}$$
\end{Th}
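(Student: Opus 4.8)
The plan is to work directly in coordinates using the vector representation $(\ref{f5})$ already established just before the statement, which expresses $f$ on $\Fqd\cong\Fq^2$ (with basis $\{1,\beta\}$) as $\langle x,y\rangle\mapsto\langle 2cx^2,2cxy\rangle$. Since $\alpha\in\beta\Fq$, its first coordinate vanishes, so I write $\alpha=\langle 0,v\rangle$ for some $v\in\Fq$, and the counting problem reduces to solving the system $2cx^2=0$ and $2cxy=v$ over $\Fq$.

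First I would observe that, because $char(\Fq)$ is odd and $c\in\Fq^*$, we have $2c\neq 0$; hence the first equation $2cx^2=0$ is equivalent to $x=0$. Substituting $x=0$ into the second equation collapses its left-hand side to $0$, so the system is consistent precisely when $v=0$.

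From here the two cases fall out immediately. If $v=0$ (that is, $\alpha=0$), then every pair of the form $\langle 0,y\rangle$ with $y\in\Fq$ is a preimage, and these are the only solutions since the first equation already forces $x=0$; this gives $\#f^{-1}(0)=q$. If $v\neq 0$ (that is, $0\neq\alpha\in\beta\Fq$), the forced value $x=0$ turns the second equation into $0=v$, which is unsatisfiable, so $f^{-1}(\alpha)=\emptyset$.

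There is no real obstacle here: once $(\ref{f5})$ is in hand, the whole argument is a two-equation system whose only subtlety is recording that $2c$ is invertible in $\Fq$. This statement is precisely the $a=1$ analogue of Theorem~$\ref{prefq}$, and the computation preceding it already carries the substance; the proof merely organizes it into the two cases.
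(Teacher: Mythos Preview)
Your proposal is correct and follows the same route as the paper: using the coordinate description $(\ref{f5})$, the equation $2cx^2=0$ forces $x=0$, whence the second coordinate $2cxy$ vanishes and the preimage is $\{\langle 0,y\rangle:y\in\Fq\}$ when $\alpha=0$ and empty otherwise. The paper in fact establishes this in the sentences immediately preceding the statement rather than in a separate proof environment, so your write-up is a slightly more explicit version of the same argument.
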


Therefore, the first connected component of the functional graph of the map $X\mapsto c(X^{q+1}+X^2)$ is zero directed to itself and the $q-1$ elements of $\Fq^*$ directed to zero, which is isomorphic to the graph of the following definition.

\begin{Df}
We define $\Zt^*(q)$ as the directed graph $Cyc(1)$ with $q-1$ trees isomorphic to $\T(1)$ attached to it. 
\end{Df}

Now we will count the existing cycles.

\begin{Th}
 Let $q-1=2^sr$, with $r$ odd. Then for every $d\neq 1$, divisor of $r$, there are $\frac{q\cdot\varphi(d)}{\ord_d(2)}$ cycles of length $\ord_d(2)$, and those are all the cycles of length greater than 1. For $d=1$ there are $\frac{q\cdot\varphi(d)}{\ord_d(2)}+1$ cycles of length $\ord_d(2)$.
\end{Th}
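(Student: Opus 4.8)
The plan is to exploit the triangular form of the map recorded in $(\ref{f5})$, namely $f\langle x,y\rangle=\langle 2cx^2,2cxy\rangle$, in which the first coordinate evolves independently of the second. First I would linearize the first coordinate by the conjugacy $u=2cx$, under which $x\mapsto 2cx^2$ becomes the squaring map $u\mapsto u^2$ on $\Fq$. A short induction then yields the closed form
$$f^{(n)}\langle x,y\rangle=\left\langle \frac{(2cx)^{2^n}}{2c},\ (2cx)^{2^n-1}\,y\right\rangle$$
for all $n\ge 1$, which reduces the whole dynamical analysis to the arithmetic of $2^n-1$ modulo the order of $u=2cx$.

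From this formula I would read off the periodicity criterion. If $x=0$ the only periodic point is the origin, so assume $x\neq 0$ and set $d=\ord(u)$. The equation $f^{(n)}\langle x,y\rangle=\langle x,y\rangle$ holds if, and only if, $u^{2^n}=u$ and $u^{2^n-1}y=y$; for $u\neq 0$ the first condition is $u^{2^n-1}=1$, and once it holds the second is automatic. Hence $\langle x,y\rangle$ is periodic exactly when some $2^n-1$ is divisible by $d$, i.e. exactly when $d$ is odd, i.e. exactly when $d\mid r$; and in that case the minimal such $n$ is $\ord_d(2)$, independently of $y$. The structural points I would stress here are that the period is governed entirely by $x$, so $y$ is irrelevant to it, and that squaring preserves the odd order $d$, so every point of a given cycle shares the same $d$; this is what makes the partition of periodic points by $d$ well defined.

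With the criterion in hand the count is bookkeeping. For a fixed odd divisor $d$ of $r$, the map $x\mapsto u=2cx$ is a bijection between the admissible values of $x$ and the elements of order $d$ in $\Fq^*$, of which there are exactly $\varphi(d)$; each such $x$ pairs with any $y\in\Fq$, giving $q\varphi(d)$ periodic points, all on cycles of length $\ord_d(2)$. Dividing by the common cycle length yields $\frac{q\,\varphi(d)}{\ord_d(2)}$ cycles. The case $d=1$ corresponds to $u=1$, i.e. $x=(2c)^{-1}$, producing the $q$ fixed points $\langle (2c)^{-1},y\rangle$, which account for $\frac{q\,\varphi(1)}{\ord_1(2)}=q$ cycles of length $1$; the extra fixed point $\langle 0,0\rangle$, excluded from the $u$-count because $u=0$ has no multiplicative order, supplies the additional $+1$.

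The only genuinely delicate point is the middle step: one must verify both that the minimal period depends solely on $\ord_d(2)$ and is insensitive to $y$, and that the order $d$ is an invariant of the entire cycle, so that no cycle is counted under two different values of $d$ and cycles arising from distinct $d$ remain genuinely distinct even when their lengths $\ord_d(2)$ happen to coincide. Everything else follows directly from the closed-form iteration above together with the standard count of elements of each order in the cyclic group $\Fq^*$.
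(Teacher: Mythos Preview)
Your argument is correct and follows essentially the same route as the paper: both derive the closed form $f^{(n)}\langle x,y\rangle=(2cx)^{2^n-1}\langle x,y\rangle$ from~(\ref{f5}), reduce periodicity to $(2cx)^{2^n-1}=1$, conclude that the minimal period is $\ord_d(2)$ for $d=\ord(2cx)$ an odd divisor of $q-1$, and then count $q\varphi(d)$ periodic points for each such $d$. Your explicit remark that $d$ is invariant along a cycle (because squaring preserves odd multiplicative order) and that the period is insensitive to $y$ is a useful clarification that the paper leaves implicit.
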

\begin{proof}
Using $(\ref{f5})$, we have that $f^{(2)}(x,y)=\langle 8x^4,8x^3y \rangle$, then, for $n\in\N$, \begin{align}
    f^{(n)}(x,y)=&\langle (2c)^{2^n-1}x^{2^n},(2c)^{2^n-1}x^{2^n-1}y\rangle\nonumber\\
    =&(2cx)^{2^n-1}\langle x,y \rangle.\label{f6}
\end{align}
If $\langle x,y\rangle$ is an element of a cycle of length $n\geq1$, different from $\langle 0,0\rangle$, then $f^{(n)}(x,y)=\langle x,y\rangle$ and $n$ is the minimal such that $(2cx)^{2^n-1}=1$. As in Theorem \ref{T9}, writing $2cx=\theta$, we have $2^n\equiv1\pmod{\ord(\theta)}$, but there exists such $n$ if, and only if, $\ord(\theta)$ is odd. Therefore, for every $n$-cycle, there is a $d$, odd divisor of $q-1$ such that $n=\ord_d(2)$. 

If $\alpha$ is a generator of $\Fq^*$, then, as seen in Theorem $\ref{T9}$, there are $\varphi(d)$ choices for $j$ such that $\ord(\alpha^j)=d$ and $q$ ordered pair $\langle x,y\rangle$ such that $2x=\alpha^j$. Consequently the number of elements in $\ord_d(2)$-cycles is $q\cdot\varphi(d)$, hence, there are $\frac{q\cdot\varphi(d)}{\ord_d(2)}$ cycles of length $\ord_d(2)$. Note that, if $d=1$, then  $\ord_1(2)=1$ and we have $\frac{q\cdot\varphi(d)}{\ord_d(2)}$ cycles of length one plus the cycle of the element zero.
\end{proof}

Finally, we only need to understand the behavior of the pre-cycles. With following lemma we will conclude that the pre-cycles of this functional graph are exact the same as the previous case.

\begin{Le}
 If $\alpha\in\Fqd\setminus\beta\Fq$, then $\#f^{-1}(\alpha)=\{0,2\}$.
\end{Le}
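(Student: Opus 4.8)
The plan is to follow the template of Lemma~\ref{zd}, replacing the vector form of $f$ used there by the description (\ref{f5}) valid in the present case. First I would write $\alpha=u+v\beta$ with $u,v\in\Fq$. Since $\beta\Fq=\{v\beta\mid v\in\Fq\}$ is exactly the set of elements whose $\Fq$-component vanishes, the hypothesis $\alpha\in\Fqd\setminus\beta\Fq$ is precisely the condition $u\neq 0$. Writing a candidate preimage as $X=x+y\beta$ with $x,y\in\Fq$, equation (\ref{f5}) turns $f(X)=\alpha$ into the system $2cx^2=u$ and $2cxy=v$.

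The next step is to solve this system, and here the hypothesis $u\neq 0$ does the essential work: it forces $x\neq 0$ in every solution, so the two equations can be handled one after the other. The equation $x^2=\tfrac{u}{2c}$ has two solutions or none according to whether $\tfrac{u}{2c}$ is a square, and since $\tfrac{u}{2c}$ and $2cu$ differ by the square factor $(2c)^{-2}$ we have $\chi_2\!\left(\tfrac{u}{2c}\right)=\chi_2(2cu)$. If $\chi_2(2cu)=-1$ there is no admissible $x$, whence $\#f^{-1}(\alpha)=0$. If $\chi_2(2cu)=1$ there are exactly two values $x=\pm x_0$ with $x_0\neq 0$, and for each of them the second equation determines $y=\tfrac{v}{2cx}$ uniquely; this yields exactly two preimages, so $\#f^{-1}(\alpha)=2$.

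I do not expect a genuine obstacle here, as the whole argument is a routine quadratic-character count. The only point worth emphasising is structural: the condition $\alpha\notin\beta\Fq$, i.e.\ $u\neq0$, is exactly what guarantees $x\neq0$ and thereby lets the equation $2cxy=v$ be solved uniquely for $y$ instead of imposing an extra constraint. This mirrors the role played by the hypothesis $v\neq0$ in Lemma~\ref{zd}, and combining the two cases above gives the claimed dichotomy $\#f^{-1}(\alpha)\in\{0,2\}$.
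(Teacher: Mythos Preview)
Your argument is correct and follows essentially the same route as the paper: write $\alpha=u+v\beta$, reduce $f(X)=\alpha$ via (\ref{f5}) to the system $2cx^2=u$, $2cxy=v$, and observe that the number of solutions is governed by $\chi_2(2cu)$, with $y$ uniquely determined once $x\neq 0$ is fixed. If anything, your write-up is slightly more explicit than the paper's in spelling out why $u\neq 0$ is exactly the hypothesis $\alpha\notin\beta\Fq$ and why this is what allows $y$ to be read off from the second equation.
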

\begin{proof}
If $\alpha=u+v\beta$ and $X=x+y\beta\in f^{-1}(\alpha)$, then $$2cx^2+2cxy\beta=u+v\beta.$$
If $u=0$ or $\chi_2(2cu)=-1$, then $f^{-1}(\alpha)=\emptyset$. If $\chi_2(2cu)=1$, then there is $x\in\Fq$ such that $x^2=\frac{u}{2c}$ and $y=\frac{v}{2cx}$. Since $\pm x$ are both solutions, those are the only ones, thus $\#f^{-1}(\alpha)=2$.
\end{proof}

Using $Cyc(\G)$ as Definition $\ref{cycg}$, where $\G$ is now the functional graph of the map $f(X)=c(X^{q+1}+X^2)$, the following theorem follows with the same proof as Theorem \ref{precyc}, just switching $-2bcy^2$ for $2cx^2$.

In conclusion, writing $q-1=2^sr$, with $r$ odd, the functional graph of the map $c(X^{q+1}+X^2)$ has one component isomorphic to $\Zt^*(q)$, where zero is the fixed point, and, for each $d$ divisor of $r$, $\frac{q\cdot\varphi(d)}{\ord_d(2)}$ components isomorphic to $(Cyc(\ord_d(2)),\T(s))$. In other words, 

\begin{Th}
    If $q-1=2^sr$, with $r$ odd, then 
    the functional graph of the function $f(x)=c(x^{q+1}+x^2)$ over $\Fqd$ is isomorphic to 
$$\G=\Zt^*(q)\bigoplus_{d\mid r}\frac{q\cdot \varphi(d)}{\ord_{d}(2)}\times\big(Cyc(\ord\nolimits_{d}(2)),\T(s)\big).$$
\end{Th}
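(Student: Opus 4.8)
The plan is to assemble the full description of $\G$ from the three structural facts already in hand: the shape of the component containing zero, the enumeration of all cycles, and the shape of the trees hanging off the periodic points. Because the functional graph of any self-map of a finite set splits uniquely into connected components, each consisting of exactly one cycle together with the forest of non-periodic elements that flow into it, it suffices to identify each component and then verify by a cardinality count that the list is complete.

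First I would isolate the component of zero. By the unlabeled theorem preceding this one, $f^{-1}(0)=\beta\Fq$ has $q$ elements; the $q-1$ nonzero ones lie in $\beta\Fq^*$ and each of those has empty preimage, so together with the loop at zero this component is exactly $Cyc(1)$ with $q-1$ copies of $\T(1)$ attached, i.e. it is isomorphic to $\Zt^*(q)$. This component is genuinely special, since its preimage chain terminates at depth $1$ irrespective of $s$; it must therefore be recorded separately from all the others.

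Next I would invoke the cycle-counting theorem: every periodic point outside $\Zt^*(q)$ lies on a cycle of length $\ord_d(2)$ for some divisor $d$ of $r$, and for each such $d$ there are exactly $\frac{q\varphi(d)}{\ord_d(2)}$ such cycles, the case $d=1$ accounting for the $q$ fixed points $\langle(2c)^{-1},y\rangle$ with $y\in\Fq$. By the pre-cycle theorem (the analogue of Theorem \ref{precyc} obtained by replacing $-2bcy^2$ with $2cx^2$), every node of $Cyc(\G)$ carries a tree isomorphic to $\T(s)$; the induction in that proof shows the binary branching persists through exactly $s$ levels, the preimage at the $k$-th level existing iff $2^k\mid q-1$, which first fails at $k=s+1$. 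Consequently each cycle of length $\ord_d(2)$, together with the $\T(s)$ tree rooted at each of its nodes, forms a connected component isomorphic to $\bigl(Cyc(\ord_d(2)),\T(s)\bigr)$.

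Finally I would confirm completeness by a count, which is also where I expect the only real care to be needed. The component $\Zt^*(q)$ has $q$ vertices, while each $\bigl(Cyc(\ord_d(2)),\T(s)\bigr)$ has $\ord_d(2)\cdot 2^s$ vertices, so the total number of vertices is
$$q+\sum_{d\mid r}\frac{q\varphi(d)}{\ord_d(2)}\cdot\ord_d(2)\cdot 2^s=q+q\,2^s\sum_{d\mid r}\varphi(d)=q+q\,2^s r=q+q(q-1)=q^2,$$
which is exactly $\#\Fqd$. Hence the listed components exhaust the graph and no two of them merge, giving the claimed isomorphism. The subtle point is the $d=1$ bookkeeping: the cycle-counting theorem produces $q+1$ fixed points in all, one of which is zero and already sits inside $\Zt^*(q)$, so only $q$ of them seed components of type $\bigl(Cyc(1),\T(s)\bigr)$, in agreement with the $d=1$ term of the direct sum.
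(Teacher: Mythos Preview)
Your proposal is correct and follows essentially the same approach as the paper, which simply assembles the three preceding results (the $\Zt^*(q)$ component, the cycle-counting theorem, and the $\T(s)$ pre-cycle structure) into the stated decomposition without a separate formal proof. Your additional cardinality check and the careful $d=1$ bookkeeping are a welcome verification that the paper leaves implicit.
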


\begin{Ex}
For $q=13$ and $c=3$ we have the following: one component isomorphic to 
\begin{center}
\includegraphics[scale=0.3]{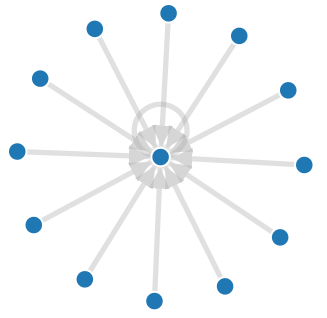}
\end{center}
for $d=1$, we have $13$ components isomorphic to 
\begin{center}
    \includegraphics[scale=0.3]{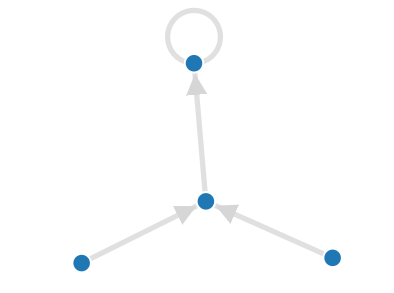}
\end{center}
and, for $d=3$, we have $\frac{13\cdot\varphi(3)}{\ord_3(2}=13$ components isomorphic to 
\begin{center}
\includegraphics[scale=0.5]{Graph213nolabel.png}
\end{center}
Equivalently, the functional graph of $f(X)=3(X^{14}+X^2)$ over $\mathbb{F}_{13^2}$ is $$\G=\Zt^*(13)\oplus13\times(Cyc(1),\T(2))\oplus13\times(Cyc(13),\T(2)).$$
\end{Ex}

\section{Partial results for $a\neq\{\pm1\}$}

In this case we will assume $c=1$ for notation purposes. The general case is equally obtained. 

When $a\neq \{\pm1\}$, writing $X=x+y\beta$, we have 
\begin{align*}
    f(X)&=(x+y\beta)(x-y\beta +a(x+y\beta))\\
    &=(x+y\beta)((1+a)x+(a-1)y\beta)\\
    &=((a+1)x^2+(a-1)by^2)+2axy\beta.
\end{align*}
Therefore, writing $\Fqd$ as a vector space over $\Fq$, we have
\begin{align}
    (\Fq)^2\to& (\Fq)^2\nonumber\\
    \langle x,y\rangle\mapsto&\langle(a+1)x^2+(a-1)by^2, 2axy\rangle\label{ff}.
\end{align}
Since $a\neq\pm1$, we conclude that $f^{-1}(0,0)=\langle 0,0\rangle$, ergo, zero is a fixed point without a pre-cycle.

Notice that if $\langle x,y\rangle$ is another fixed point, then it must satisfy the following equations \begin{center}
    $(a+1)x^2+(a-1)by^2=x$ and $2axy=y$.
\end{center}
If $y\neq 0$, the second equation implies $x=\frac{1}{2a}$. Consequently, by the first one $\frac{a+1}{4a^2}+(a-1)by^2=\frac{1}{2a}$, and $y^2=\frac{1}{4ba^2}$. As $b$ is not a square in $\Fq$, this equation has no solution. If $y=0$, then $x=\frac{1}{a+1}$. Hence $\langle\frac{1}{a+1},0\rangle$ and $\langle0,0\rangle$ are the only fixed points. 

Now we will look at the preimage of two distinct types of points: $\alpha\in\Fq$ and $\alpha\in\beta\Fq$. 
\begin{Th}\label{pre}    
For $\alpha\in \Fq^*$,
    \begin{enumerate}
        \item $\#f^{-1}(\alpha)=\begin{cases}0,&\text{if}\ \chi_2(\alpha(a-1))=1\ \text{and}\ \chi_2(\alpha(a+1))=-1\\
        4,&\text{if}\ \chi_2(\alpha(a-1))=-1\ \text{and}\ \chi_2(\alpha(a+1))=1\\
        2,&\text{otherwise}
        %\text{if}\ \chi_2(\alpha(a-1))=-1\ \text{or}\ \chi_2(\alpha(a+1))=1.
        \end{cases}$
        
        \item if $q\equiv3\pmod4$, then $$\#f^{-1}(\alpha\beta)=\begin{cases}
        0,&\text{if}\ \chi_2(-a^2+1)=1\\
        2,&\text{if}\ \chi_2(-a^2+1)=-1\end{cases}$$
        
        \item if $q\equiv1\pmod4$, then $$\#f^{-1}(\alpha\beta)=\begin{cases}%0,&\text{if}\ \chi_2(-a^2+1)=1\ \text{or}\ \chi_2(2\gamma_a\alpha a)=-1\\
        4,&\text{if}\ \chi_2(-a^2+1)=-1\ \text{and}\ \chi_2(2\gamma_a\alpha a)=-1\\
        0,&\text{otherwise},
        \end{cases}$$
    \end{enumerate}
    where $\gamma_a^2=-\frac{(a-1)b}{a+1}$.
\end{Th}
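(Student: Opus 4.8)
The plan is to reduce every preimage count to the solvability of a quadratic system over $\Fq$, using the coordinate description (\ref{ff}): writing $X = x + y\beta$ and expanding the target in the basis $\{1,\beta\}$, solving $f(X) = \alpha$ amounts to solving
$$(a+1)x^2 + (a-1)by^2 = (\text{real part of }\alpha), \qquad 2axy = (\beta\text{-coefficient of }\alpha).$$
Throughout I will use that $\chi_2$ is multiplicative on $\Fq^*$, that $\chi_2(u^{-1}) = \chi_2(u)$, and that $\chi_2(b) = -1$ since $b$ is a non-square.

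For part (1), with $\alpha \in \Fq^*$ the system reads $(a+1)x^2 + (a-1)by^2 = \alpha$ and $2axy = 0$. Since $a \neq 0$, the second equation forces $x = 0$ or $y = 0$, and these two branches are disjoint because $\alpha \neq 0$. The branch $y = 0$ reduces to $x^2 = \alpha/(a+1)$ (well defined as $a \neq -1$), contributing two solutions exactly when $\chi_2(\alpha(a+1)) = 1$; the branch $x = 0$ reduces to $y^2 = \alpha/((a-1)b)$ (well defined as $a \neq 1$), contributing two solutions exactly when $\chi_2(\alpha(a-1)b) = 1$, i.e. when $\chi_2(\alpha(a-1)) = -1$ after using $\chi_2(b) = -1$. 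Adding the two branch counts yields the three cases $4$, $2$, $0$ as stated.

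For parts (2)--(3), with target $\alpha\beta$ (where $\alpha \in \Fq^*$) the system becomes $(a+1)x^2 + (a-1)by^2 = 0$ and $2axy = \alpha$. The second equation forces $xy \neq 0$, so the first gives $x^2 = \gamma_a^2\, y^2$ with $\gamma_a^2 = -\frac{(a-1)b}{a+1}$. Hence solutions can exist only if $\gamma_a^2$ is a square in $\Fq$; computing its character with $\chi_2(b) = -1$ shows this happens precisely when $\chi_2(-a^2+1) = -1$, which immediately gives the ``$0$'' verdict whenever $\chi_2(-a^2+1) = 1$. When $\gamma_a$ exists I write $x = \pm\gamma_a y$ and substitute into $2axy = \alpha$ to obtain $y^2 = \pm\frac{\alpha}{2a\gamma_a}$, one equation per sign. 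Each solvable sign contributes a $\pm y$ pair, hence two preimages, and solvability of a given sign is a single quadratic-character condition on $\pm\frac{\alpha}{2a\gamma_a}$.

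The crux is then the dichotomy coming from $\chi_2(-1)$. The two sign-branches ask whether $\frac{\alpha}{2a\gamma_a}$ and its negative are squares, and these two conditions differ exactly by the factor $\chi_2(-1)$. When $q \equiv 3 \pmod 4$ we have $\chi_2(-1) = -1$, so exactly one branch is solvable and the total is $2$, independent of $\alpha$; when $q \equiv 1 \pmod 4$ we have $\chi_2(-1) = 1$, so the two branches stand or fall together, giving $4$ or $0$ according to whether $\frac{\alpha}{2a\gamma_a}$, equivalently $2\gamma_a\alpha a$, is a square. I expect the main obstacle to be the careful bookkeeping of these quadratic characters---reducing each expression to the clean form via multiplicativity and $\chi_2(b) = -1$, and in particular pinning down the sign inside $\chi_2(\cdot)$ in the $q\equiv 1$ case---rather than any conceptual difficulty.
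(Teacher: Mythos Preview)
Your approach is essentially identical to the paper's: both reduce to the coordinate system (\ref{ff}), split part~(1) into the disjoint branches $x=0$ and $y=0$, and handle parts~(2)--(3) by writing $x=\pm\gamma_a y$, substituting into $2axy=\alpha$, and then invoking $\chi_2(-1)$ to separate the $q\equiv 1,3\pmod 4$ cases. Your derived condition in part~(3)---four preimages exactly when $2\gamma_a\alpha a$ is a square---matches the paper's own proof verbatim; the printed statement has $\chi_2(2\gamma_a\alpha a)=-1$, which is a sign slip in the statement relative to the argument.
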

\begin{proof}
If $\alpha\in\Fq$, then  $\langle x,y \rangle\in f^{-1}(\alpha)$ implies that \begin{center}
    $(a+1)x^2+(a-1)by^2=\alpha$ and $2axy=0$.
\end{center}

If $x=0$, we have $y^2=\frac{\alpha}{(a-1)b}$ which has two solutions if $\chi_2(\alpha(a-1))=-1$ and zero otherwise. If $y=0$, we have $x^2=\frac{\alpha}{a+1}$ which has two solutions if $\chi_2(\alpha(a+1))=1$ and zero otherwise.

Conversely, $\alpha\beta=\langle 0,\alpha\rangle$, then if $\langle x,y \rangle\in f^{-1}(\alpha\beta)$, we have  $(a+1)x^2+(a-1)by^2=0$ and $2axy=\alpha$ or, equivalently, \begin{center}
    $\big(\frac{x}{y}\big)^2=-\frac{(a-1)b}{a+1}$ and $xy=\frac{\alpha}{2a}$.
\end{center}  The first equation has a solution if, and only if, $\chi_2(-a^2+1)=-1$. If $\gamma_a$ is such that $\gamma_a^2=-\frac{(a-1)b}{a+1}$, then $\frac{x}{y}=\pm\gamma_a$ and, multiplying by the second equation, $x^2=\pm\frac{\gamma_a\alpha}{2a}$. 

Note that if $q\equiv3\pmod4$, then $x^2=\pm\frac{\gamma_a\alpha}{2a}$ has two solutions, since $-1$ is not a square and we can choose the signal in order for it to have solutions. On the other hand, if $q\equiv1\pmod4$, then $-1$ is a square and either $\chi_2(2\gamma_a\alpha a)=1$ and $x^2=\pm\frac{\gamma_a\alpha}{2a}$ has four solutions or $\chi_2(2\gamma_a\alpha a)=-1$ and $x^2=\pm\frac{\gamma_a\alpha}{2a}$ has no solutions, and this concludes our proof.
\end{proof}

Restricting $f$ to $\Fq$, notice that \begin{align*}
    f|_{\Fq}:\Fq&\to\Fq\\
    x&\mapsto (a+1)x^{2}.
\end{align*}
The functional graph of $x\mapsto x^2$ over $\Fq$ has been studied and described in several different papers, as $\cite{ChSh}$ and $\cite{VaSh}$. Since its behavior is known, here it will be denoted simply as $\G(f|_{\Fq})$.

When $\chi_2(1-a^2)=1$, with the previous theorem we conclude that, for every point $\alpha\in\G(f|_{\Fq})$, if $\chi_2(\alpha(a-1))=1$, then there are no points outside of $\G(f|_{\Fq})$ in its preimage. On the other hand, if $\chi_2(\alpha(a-1))=-1$, then there are exactly two points outside of $\G(f|_{\Fq})$ in its preimage: $\{\lambda_\alpha, -\lambda_\alpha\}\in\beta\Fq$ such that $\lambda_\alpha^2=\frac{\alpha}{(a-1)b}$. In addition, $\pm\lambda_\alpha$ has no preimage.

In particular, we can describe precisely the pre-cycle of the fixed point $\langle\frac{1}{a+1},0\rangle$. First, assuming $q\equiv3\pmod4$, notice that the preimage of $\langle\frac{1}{a+1},0\rangle$ has $4$ elements. As seen in the proof of Theorem $\ref{pre}$, they are $\langle-\frac{1}{a+1},0\rangle$ and $\langle0,\pm\lambda\rangle$, for $\lambda^2=\frac{1}{(a^2-1)b}=$. Since $\chi_2(1-a^2)=1$, by part $2$ we have $\#f^{-1}(0,\pm\lambda)=0$. As $\chi_2(-1)=-1$, then by part $1$, we have $\#f^{-1}(-\frac{1}{a+1},0)=0$. In conclusion, the connected component of $\frac{1}{a+1}$ is isomorphic to $\Zt(4)$.

Now, for $q\equiv1\pmod4$, the preimage of $\frac{1}{a+1}$ is $$f^{-1}\Big(\frac{1}{a+1},0\Big)=\Big\{\Big\langle\frac{1}{a+1},0\Big\rangle,\Big\langle-\frac{1}{a+1},0\Big\rangle\Big\}.$$ 
Since $-1$ is a square, the preimage of $\langle-\frac{1}{a+1},0\rangle$ has $2$ elements: 

 $$f^{-1}\Big(-\frac{1}{a+1},0\Big)=\Big\{\Big\langle\pm\frac{\zeta_4}{a+1},0\Big\rangle\Big\},$$
where $\zeta_n$ is a $n$-th root of unity in $\Fq$. 

If $q\equiv5\pmod8$, then $\zeta_4$ is not a square and $$f^{-1}\Big(\frac{\zeta_4}{a+1},0\Big)=\{\langle0,\pm\lambda\rangle\},$$ for $\lambda^2=\frac{\zeta_4}{(a^2-1)b}.$ Observe that this tree does not go up any more levels, since $ \chi_2(-a^2+1)=1$ and part $3$ of the theorem implies that $\#f^{-1}(0,\pm\lambda)=0$.

If $q\equiv1\pmod8$, then $\zeta_4$ is a square and $$f^{-1}\Big(\frac{\zeta_4}{a+1},0\Big)=\Big\{\Big\langle\pm\frac{\zeta_8}{a+1},0\Big\rangle\Big\}.$$
Following by an induction process similar to Theorem $\ref{precyc}$, we obtain the result below.

\begin{Th}
Let $q-1=2^sr$, with $r$ odd, and $f(X)= c(X^{q+1}+aX^2) $, where $a\neq\{\pm1\}$. If  $\chi_2(1-a^2)=1$, then the connected components that contain the elements of $\Fq$ can be obtained by attaching two nodes to every point $\alpha\in\G(f|_{\Fq})$ that satisfies $\chi_2(\alpha(a-1))=-1$. In particular, $0$ is an isolated fixed point and $\frac 1{a+1}$ is a fixed point with connected component isomorphic to $\Zt(4)$ if $s=1$, or isomorphic to $(Cyc(1),\T(s+1))$ if $s\geq 2$.
\end{Th}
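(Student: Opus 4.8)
The plan is to reduce the whole statement to three ingredients already at hand: the forward-invariance of $\Fq$ under $f$, the preimage count of Theorem~\ref{pre}, and the fact that the relevant points of $\beta\Fq$ are leaves. The single algebraic fact that couples the parity of $s$ to the local branching is the hypothesis rewritten as $\chi_2(a^2-1)=\chi_2(-1)\chi_2(1-a^2)=\chi_2(-1)$. For the general claim I would first note that $f|_{\Fq}(x)=(a+1)x^2$ sends $\Fq$ into itself, so the forward orbit of any $\alpha\in\Fq$ stays in $\Fq$ and eventually meets a cycle of $\G(f|_{\Fq})$; thus the $\Fq$-skeleton of each such component is exactly a component of $\G(f|_{\Fq})$. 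From $(\ref{ff})$, any $x+y\beta$ with $x,y\neq 0$ has image with nonzero $\beta$-coordinate $2axy$, so every preimage of an element of $\Fq^*$ satisfies $xy=0$; by Theorem~\ref{pre}(1) these split into the internal preimages ($y=0$, lying in $\G(f|_{\Fq})$, present iff $\chi_2(\alpha(a+1))=1$) and the external preimages ($x=0$, the two points $\pm\lambda_\alpha\in\beta\Fq$ with $\lambda_\alpha^2=\alpha/((a-1)b)$, present iff $\chi_2(\alpha(a-1))=-1$). Finally, Theorem~\ref{pre}(2)--(3) together with $\chi_2(1-a^2)=1$ force $\#f^{-1}(\lambda_\alpha\beta)=0$ in both residue classes of $q$ modulo $4$, so the external points are genuine leaves. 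Hence each component meeting $\Fq$ is its $\G(f|_{\Fq})$-skeleton with the pair $\{\lambda_\alpha,-\lambda_\alpha\}$ attached to every vertex $\alpha$ with $\chi_2(\alpha(a-1))=-1$, as asserted.

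Next I would isolate the two distinguished fixed points. That $0$ is isolated is immediate from $(\ref{ff})$: $f(x,y)=\langle 0,0\rangle$ forces $xy=0$ and then $x=y=0$ since $a\neq\pm1$, so $f^{-1}(0)=\{0\}$. For $\alpha_0:=\frac{1}{a+1}$ one checks $f|_{\Fq}(\alpha_0)=\alpha_0$. When $s=1$ (i.e. $q\equiv3\pmod4$), Theorem~\ref{pre}(1) gives $\chi_2(\alpha_0(a-1))=\chi_2(a^2-1)=\chi_2(-1)=-1$ and $\chi_2(\alpha_0(a+1))=1$, so $\alpha_0$ has four preimages: the loop $\alpha_0$, the single $\Fq$-point $-\alpha_0$, and the two $\beta\Fq$-points $\pm\lambda$ with $\lambda^2=\alpha_0/((a-1)b)$. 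A second application gives $\#f^{-1}(-\alpha_0)=0$, and the general claim shows $\pm\lambda$ are leaves; thus the component is a $Cyc(1)$ carrying three leaves, the graph the statement denotes $\Zt(4)$.

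For $s\geq 2$ ($q\equiv1\pmod4$) I would run an induction along the purely $\Fq$-valued backward chain, mirroring the proof of Theorem~\ref{precyc}. Writing a level-$k$ node as $\zeta\alpha_0$ for a primitive $2^k$-th root of unity $\zeta$ (so level $0$ is $\alpha_0$ and level $1$ is $-\alpha_0$), Theorem~\ref{pre}(1) applied to such a node gives both relevant characters equal to $\chi_2(\zeta)$, since $\chi_2(\alpha(a-1))=\chi_2(\zeta)\chi_2(a^2-1)=\chi_2(\zeta)$ because $\chi_2(a^2-1)=\chi_2(-1)=1$ here. The inductive step then bifurcates on whether $\zeta$ is a square: for $k<s$ one has $\chi_2(\zeta)=1$, so the node acquires exactly its two square-root preimages in $\Fq$ and no $\beta\Fq$-leaves, continuing the chain; for $k=s$ one has $\chi_2(\zeta)=-1$, so the $\Fq$-chain stops and the node acquires instead two $\beta\Fq$-leaves. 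Counting primitive $2^k$-th roots yields $2^{k-1}$ nodes at level $k$ for $1\le k\le s$ and $2^s$ leaves at level $s+1$, which is precisely the profile of $\T(s+1)$; hence the component is $(Cyc(1),\T(s+1))$.

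The main obstacle is the bookkeeping of this last induction: one must verify that $\zeta$ is a square in $\Fq$ exactly when $k\le s-1$ (equivalently $2^{k+1}\mid q-1$), that the two square roots of a primitive $2^k$-th root are themselves primitive $2^{k+1}$-th roots so that the levels are correctly populated, and that no spurious $\beta\Fq$-leaves sprout at intermediate levels. All of these rest on the single identity $\chi_2(a^2-1)=\chi_2(-1)$ furnished by the hypothesis, and controlling this dichotomy cleanly is exactly what separates the star-shaped $s=1$ case from the tree $\T(s+1)$ obtained for $s\ge 2$.
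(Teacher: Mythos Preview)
Your proof is correct and follows essentially the same route as the paper: apply Theorem~\ref{pre} to see that the only external preimages of $\alpha\in\Fq^*$ lie in $\beta\Fq$ and are leaves when $\chi_2(1-a^2)=1$, then climb the backward $\Fq$-chain from $\frac{1}{a+1}$ through the points $\zeta_{2^k}\alpha_0$ until $\zeta_{2^s}$ fails to be a square. Your version is in fact somewhat cleaner than the paper's discussion, since you isolate the identity $\chi_2(a^2-1)=\chi_2(-1)$ up front and use it to treat all levels uniformly, whereas the paper splits into the cases $q\equiv 3,5,1\pmod{4,8,8}$ before invoking the induction of Theorem~\ref{precyc}.
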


\begin{Ex}
For $q=13$ and $a=2$, notice that $s=2$ and that $\chi_2(1-2^2)=\chi_2(10)=1$. Then the connected components of the elements in $\Fq$ are

\begin{center}
    \includegraphics[scale=0.5]{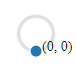}
\end{center}

\begin{center}
    \includegraphics[scale=0.5]{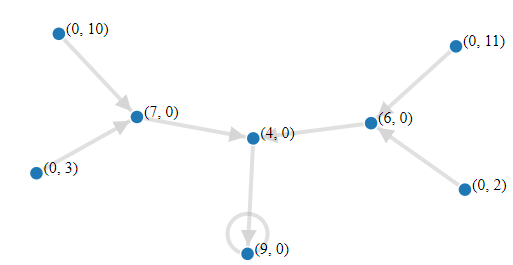}
\end{center}

\begin{center}
    \includegraphics[scale=0.6]{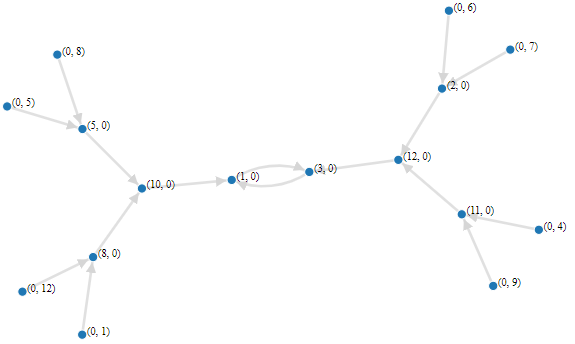}
\end{center}

The component that contains zero is an isolated cycle, the component of $\frac{1}{a+1}=9$ is isomorphic to $(Cyc(1), \T(3))$ and all the points $\alpha\in\Fq$ that satisfy $\chi_2(\alpha(a-1))=-1$ have two points in $\beta\Fq$ directed to them.
\end{Ex}

When $\chi_2(1-a^2)=-1$, the conditions that control the growth of such components are harder to obtain. For example, if $q\equiv3\pmod4$, to make this trees go up exactly one more level, i.e. have the elements in $\beta\Fq$, attached to the points $\alpha\in\G(f|_{\Fq})$ that satisfy $\chi_2(\alpha(a-1))=-1$, have two preimages which have no preimage, we must ask $ba^2+(a+1)^2$ to be a square in $\Fq$.

In the following examples we will give the component that contains $\frac{1}{a+1}$ for $\chi_2(1-a^2)=-1$.

\begin{Ex}
Let $q=19$ and $a=5$. Notice that $\chi_2(1-5^2)=-1$. Then the connected component of the fixed point $6^{-1}=16$ in the functional graph of $f(X)=X^{q+1}+5X^2$ over $\mathbb{F}_{19^2}$ is isomorphic to 
\begin{center}
\includegraphics[scale=0.3]{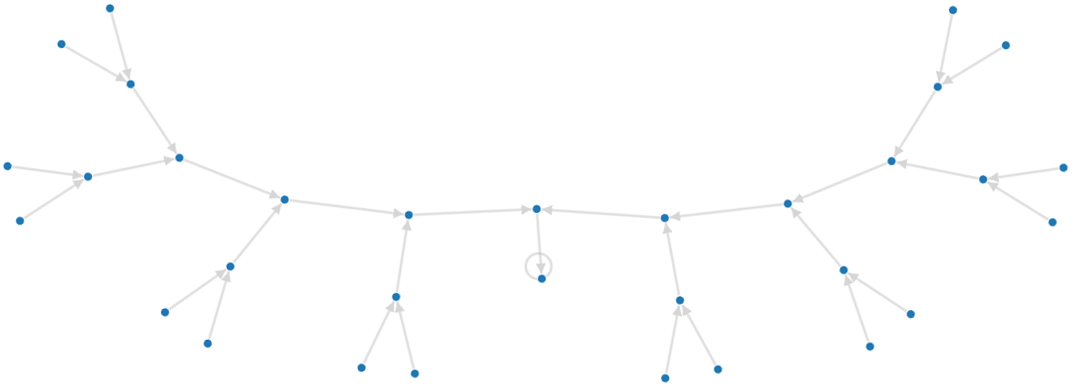}
\end{center}
\end{Ex}

\begin{Ex}
Let $q=13$ and $a=3$. Once more $\chi_2(1-3^2)=-1$. Then the connected component of the fixed point $4^{-1}=10$ in the functional graph of $f(X)=X^{q+1}+3X^2$ over $\mathbb{F}_{13^2}$ is isomorphic to
\begin{center}
    \includegraphics[scale=0.3]{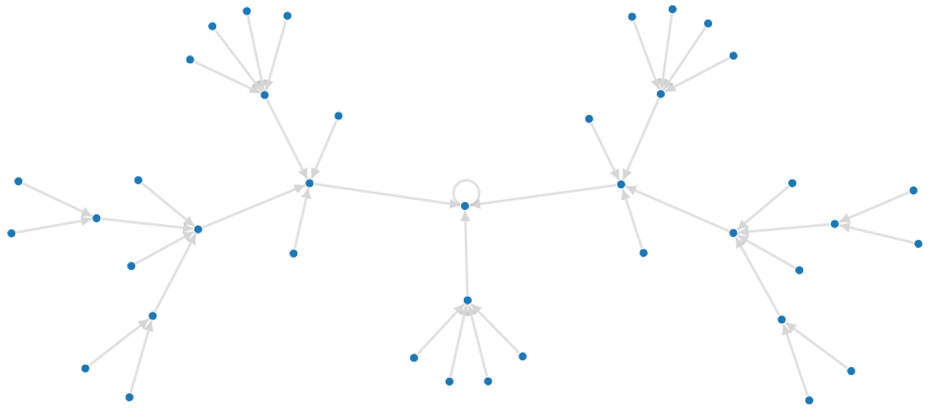}
\end{center}
\end{Ex}

\end{document}